\newcommand{\R}{{\mathbb R}}       % Field of real numbers
\newcommand{\DD}{{\mathcal D}}
\newcommand{\HH}{{\mathcal H}}
\newcommand{\RR}{{\mathcal R}}
\newcommand{\EE}{{\mathcal E}}
\newcommand{\diam}{\mathop{\rm diam}}
\newcommand{\dist}{{\rm dist}}
\newcommand{\rf}[1]{{(\ref{#1})}}
\newcommand{\supp}{\operatorname{supp}}
\newcommand{\ve}{{\varepsilon}}
\newcommand{\vv}{{\vspace{2mm}}}
\newcommand{\vvv}{\vspace{4mm}}
\newcommand{\wt}[1]{{\widetilde{#1}}}
\newcommand{\bad}{{\mathsf{Bad}}}
\newcommand{\good}{{\mathsf{Good}}}
\newcommand{\HD}{{\mathsf{HD}}}
\newcommand{\LM}{{\mathsf{LM}}}
\def\Xint#1{\mathchoice
{\XXint\displaystyle\textstyle{#1}}%
{\XXint\textstyle\scriptstyle{#1}}%
{\XXint\scriptstyle\scriptscriptstyle{#1}}%
{\XXint\scriptscriptstyle\scriptscriptstyle{#1}}%
\!\int}
\def\XXint#1#2#3{{\setbox0=\hbox{$#1{#2#3}{\int}$ }
\vcenter{\hbox{$#2#3$ }}\kern-.58\wd0}}
\def\avint{\Xint-}
\newtheorem{theorem}{Theorem}[section]
\newtheorem{lemma}[theorem]{Lemma}
\newtheorem{corollary}[theorem]{Corollary}
\newtheorem*{lemma*}{Lemma}
\newtheorem*{theorem*}{Theorem}
\theoremstyle{definition}
\theoremstyle{remark}
\newtheorem{rem}[theorem]{\bf Remark}
\numberwithin{equation}{section}
\newcommand{\cnj}[1]{\overline{#1}}
\newcommand{\brem}{\begin{rem}}
\newcommand{\erem}{\end{rem}}
\def\d{\partial}
\begin{document}

\title{Rectifiability of harmonic measure in domains with porous boundaries}

\author{Jonas Azzam}
\address{Departament de Matem\`atiques\\ Universitat Aut\`onoma de Barcelona \\ Edifici C Facultat de Ci\`encies\\
08193 Bellaterra (Barcelona) }
\email{jazzam@mat.uab.cat}
\author{Mihalis Mourgoglou}
\address{Departament de Matem\`atiques\\ Universitat Aut\`onoma de Barcelona and Centre de Reserca Matem\` atica\\ Edifici C Facultat de Ci\`encies\\
08193 Bellaterra (Barcelona) }
\email{mmourgoglou@crm.cat}
\author{Xavier Tolsa}
\address{ICREA and Departament de Matem\`atiques\\ Universitat Aut\`onoma de Barcelona \\ Edifici C Facultat de Ci\`encies\\
08193 Bellaterra (Barcelona) }
\email{xtolsa@mat.uab.cat}
%\thanks{Supported in part by the grants RTG DMS 08-38212  and DMS-0856687}
\keywords{Harmonic measure, absolute continuity, nontangentially accessible (NTA) domains, $A_{\infty}$-weights, doubling measures, porosity}
\subjclass[2010]{31A15,28A75,28A78}
\thanks{The three authors were supported by the ERC grant 320501 of the European Research Council (FP7/2007-2013). X.T. was also supported by 2014-SGR-75 (Catalonia), MTM2013-44304-P (Spain), and by the Marie Curie ITN MAnET (FP7-607647).}

\maketitle

\begin{abstract}
We show that if $n\geq 1$, $\Omega\subset \R^{n+1}$ is a connected domain that is porous around a subset $E\subset \d\Omega$ of finite and positive Hausdorff $\HH^{n}$-measure, and the harmonic measure $\omega$ is absolutely continuous with respect to $\HH^{n}$ on $E$, then $\omega|_E$ is concentrated on an $n$-rectifiable set. 
\end{abstract}

\section{Introduction}

There is a strong connection between the rectifiability of the boundary of a domain in Euclidean space and the possible absolute continuity of harmonic measure  with respect to Hausdorff measure. Recall that a set $E$ is {\it $n$-rectifiable} if it can be covered by a countable union of (possibly rotated) $n$-dimensional  Lipschitz graphs up to a set of zero $n$-dimensional Hausdorff measure $\HH^{n}$. The local F. and M. Riesz theorem of Bishop and Jones \cite{BJ} says that, if $\Omega$ is a simply connected planar domain and $\Gamma$ is a curve of finite length, then $\omega\ll \HH^{1}$ on $\d\Omega\cap \Gamma$, where $\omega$ stands for harmonic measure. In the same paper, Bishop and Jones also provide an example of a domain $\Omega$ whose boundary is contained in a curve of finite length, but $\HH^{1}(\d\Omega)=0<\omega(\d\Omega)$, thus showing that some sort of connectedness in the boundary is required. 

A higher dimensional version of the theorem of Bishop and Jones does not hold, even when the analogous ``connectivity" assumption holds for the boundary. In \cite{Wu}, Wu builds a topological sphere in $\R^{3}$ of finite surface measure bounding a domain whose harmonic measure charges a set of Hausdorff dimension 1 contained in $\R^{2}$. However, under some extra geometric assumptions, higher dimensional versions of the Bishop-Jones result do hold. For example, Wu shows in the same paper that if $\Omega\subset \R^{n+1}$ is a domain with {\it interior corkscrews}, meaning $\Omega\cap B(x,r)$ contains a ball of radius $r/C$ for every $x\in \d\Omega$ and $r\in (0,\diam\d\Omega)$, then $\omega\ll \HH^{n}$ on $\Gamma\cap \d\Omega$ whenever $\Gamma$ is a bi-Lipschitz image of $\R^{n}$ (or in fact a somewhat more general surface). 

Many results that establish absolute continuity follow a similar pattern to the results of Bishop, Jones, and Wu by considering portions of the boundary that are contained in nicer (and usually rectifiable) surfaces. For example, if $\Omega$ is a Lischitz domain (meaning he boundary is a union of Lipschitz graphs), then Dahlberg shows in \cite{Da} that $\omega \ll \HH^{n}\ll \omega$ on $\d\Omega$. The works of \cite{Ba} and \cite{DJ} also establish various degrees of mutual absolute continuity in nontangentially accessible domains when $\HH^{n}|_{\d\Omega}$ is Radon.
Recall that a domain is {\it nontangentially accessible (or NTA)} \cite{JK} if it has {\it exterior corkscrews} (meaning $(\cnj{\Omega})^{\circ}$ has interior corkscrews) and it is {\it uniform}, meaning there is $C>0$ so that for every $x,y\in \cnj{\Omega}$ there is a path $\gamma\subset\Omega$ connecting $x$ and $y$ such that
\begin{enumerate}[(a)]
\item the length of $\gamma$ is at most $C|x-y|$ and
\item for $t\in \gamma$, $\dist(t,\d\Omega)\geq \dist(t,\{x,y\})/C$. 
\end{enumerate}
 In \cite{Az}, the first author shows that, for NTA domains $\Omega\subset \R^{n+1}$, if $\Gamma\subset\partial \Omega$ is an $n$-Ahlfors regular set (meaning $\HH^{n}(B(x,r)\cap \Gamma)\sim r^{n}$ for any ball $B(x,r)$ centered on $\Gamma$ with $r\in (0,\diam \Gamma)$), then $\omega\ll \HH^{n}$ on $\d\Omega\cap \Gamma$ and $\omega|_{\d\Omega\cap \Gamma}$ is supported on an $n$-rectifiable set.
 
Without knowing that the portion of the boundary in question is contained in a nice enough surrogate set, things can go wrong. In \cite{AMT}, we constructed an NTA domain $\Omega\subset \R^{n+1}$ with very flat boundary, with $\HH^{n}|_{\d\Omega}$ locally finite and $n$-rectifiable, yet with $\d\Omega$ containing a set $E$ so that $\omega(E)>0=\HH^{n}(E)$. Observe that, in
this case, while $\d\Omega$ is still $n$-rectifiable, by the results of \cite{Az} described earlier, it follows that such a set $E$ cannot intersect a Lipschitz graph (or any Ahlfors regular set) in a set of positive $\omega$-measure. We think the result of 
\cite{AMT} is quite surprising in light of the previous results involving rectifiability and harmonic measure, as one might think that the rectifiability of $\partial\Omega$ should be enough to guarantee $\omega\ll \HH^{n}$. 

It is a natural question to ask then if the rectifiability of $\omega$ is actually {\it necessary} for absolute continuity, that is, if the support of $\omega$ can be exhausted up to a set of $\omega$-measure zero by $n$-dimensional Lipschitz graphs\footnote{We stress that when we speak of a measure $\omega$ being rectifiable, we mean that it may be covered up to a set of $\omega$-measure zero by $n$-dimensional Lipschitz {\it graphs}. This is a stronger criterion than rectifiability of measures as defined by Federer in \cite{Fed}, who defines this as being covered up to a set of $\omega$-measure zero by Lipschitz {\it images} of subsets of $\R^{n}$.}. Some results of this nature already exist. Recall that if $\Omega$ is a simply connected planar domain, $\phi:\mathbb{D}\rightarrow \Omega$ is a conformal map, and $G\subset \mathbb{D}$ is the set of points where $\phi$ has nonzero angular derivative, then there is $S\subset \d\Omega$ with $\HH^{1}(S)=0$ and $\omega(S\cup \phi(G))=1$ (see Theorem VI.6.1 in \cite{GM}). Thus, if $E\subset \d\Omega$ is such that $0<\HH^{1}(E)<\infty$ and $\omega\ll \HH^{1}$ on $E$, then $\omega(E\cap S)=0$, so $\omega$-almost every point in $E$ is in $\phi(G)$. Since all points in $\phi(G)$ are cone points  (p. 208 of \cite{GM}) and the set of cone points is a rectifiable set (Lemma 15.13 in \cite{Ma}), $\phi(G)\cap E$ is $1$-rectifiable and thus $\omega|_{E}$ is $1$-rectifiable.

In the work \cite{HMU}, Hofmann, Martell and Uriarte-Tuero show that if $\Omega\subset\R^{n+1}$ is a uniform domain, $\d\Omega$ is Ahlfors regular,
 and harmonic measure satisfies the weak-$A_{\infty}$ condition, then $\partial\Omega$ is uniformly rectifiable.
 The weak-$A_{\infty}$ condition is a stronger assumption than $\omega$ being absolutely continuous, but $\d\Omega$ being uniformly rectifiable is a stronger conclusion than just being rectifiable. We omit the definitions of these terms and refer the reader to these references. 
 \\

Our main result is the following.

%\begin{theorem*}
%Suppose $\Omega\subset \R^{n+1}$ is a domain, $n\geq 2$, such that $\d\Omega$ is $n$-Ahlfors regular, meaning there is some $A>0$ so that
%\[ r^{n}/A\leq \HH^{n}(B(\xi,r)\cap \d\Omega)\leq A\mbox{ for all }\xi\in \d\Omega, r\in (0,\diam\d\Omega).\]
%Also suppose $w\ll \HH^{n}$ on $\d\Omega$ and $w$ is weak-$A_{\infty}$. Then $\d\Omega$ is uniformly rectifiable.
%\end{theorem*}
%

\begin{theorem}\label{teo1}
Let $n\geq 1$ and $\Omega\subset\R^{n+1}$ be a proper domain of $\R^{n+1}$
% with a porous boundary, meaning every ball $B$ centered on $\d\Omega$
% contains another ball $B'\subset \R^{n+1}\setminus \d\Omega$ with $r(B)\sim r(B')$, with the implicit constant depending only on $\Omega$. 
and let $\omega$ be the harmonic measure in $\Omega$. 
Suppose that there exists $E\subset\partial\Omega$ with $0<\HH^n(E)<\infty$ and
that $\partial \Omega$ is porous in $E$, i.e. there is $r_{0}>0$ so that \
every ball $B$ centered at $E$ of radius at most $r_{0}$
contains another ball $B'\subset \R^{n+1}\setminus \partial\Omega$ with $r(B)\sim r(B')$, with the implicit constant
depending only on $E$.
If $\omega|_E$ is absolutely continuous with respect to $\HH^n|_{E}$, then
 $\omega|_E$ is $n$-rectifiable, in the sense that  $\omega$-almost all of $E$ can be covered by a countable union of $n$-dimensional (possibly rotated) Lipschitz graphs.
\end{theorem}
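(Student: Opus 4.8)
The plan is to exploit the dichotomy, familiar from the study of harmonic measure, between flat pieces of the boundary (where one expects rectifiability and absolute continuity) and non-flat pieces (where one expects the boundary to behave like a lower-dimensional or very rough set, forcing $\omega \perp \HH^n$). Concretely, I would argue by contradiction: suppose $\omega|_E$ is \emph{not} $n$-rectifiable. Since $\omega|_E \ll \HH^n|_E$ and $\HH^n(E)<\infty$, the density $\frac{d\omega}{d\HH^n}$ exists $\HH^n$-a.e.\ on $E$, and by restricting to a subset of positive $\omega$-measure I may assume this density is bounded above and below, so that $\omega|_E$ and $\HH^n|_E$ are comparable on the relevant portion. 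The key first reduction is then to replace $E$ by a compact subset $F\subset E$ with $\omega(F)>0$ on which $\HH^n|_F$ has \emph{upper} and \emph{lower} density bounds $\HH^n(B(x,r)\cap F)\lesssim r^n$ for $x\in F$, $r$ small, while simultaneously $F$ carries no $n$-rectifiable subset of positive measure (using that $\omega|_E$ is purely non-$n$-rectifiable on a positive-measure part, by the assumption for contradiction). Porosity of $\partial\Omega$ in $E$ is inherited on $F$ and gives exterior corkscrew-type access: every small ball centered on $F$ contains a ball in $\R^{n+1}\setminus\partial\Omega$, so in a component of $\R^{n+1}\setminus\partial\Omega$ — we may take $\Omega$ itself on a further subset, or pass to one fixed complementary component where $\omega$ lives.

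Next I would bring in quantitative rectifiability. The main analytic input is the relationship between harmonic measure, the Riesz transform, and rectifiability: on a set where harmonic measure is comparable to $\HH^n$ and $\HH^n$ has polynomial upper growth, one can control Riesz transform operators and deduce flatness. I expect to use the following chain. First, doubling/corkscrew arguments: porosity plus the comparability $\omega\approx\HH^n$ on $F$ forces $\omega$ to be (in a weak, dyadic sense) a growth-$n$ measure, and by maximum-principle estimates relating $\omega$ in balls to capacities or to Green's function, one gets that the ``density ratios'' $\omega(B(x,r))/r^n$ are essentially bounded. Then I would invoke the machinery connecting the $L^2$-boundedness (or finiteness of principal values) of the $n$-dimensional Riesz transform $R\mu$ with $\mu$-a.e.\ existence of approximate tangent planes — that is, the Nazarov–Tolsa–Volberg solution of the David–Semmes problem / the rectifiability criterion via Riesz transforms. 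The point is that harmonic measure in a domain with good exterior access comes with a bounded (truncated) Riesz transform on it: one writes $\nabla G$ in terms of a Riesz-type kernel against $\omega$, uses the corkscrew balls to get the non-degeneracy needed, and concludes that $R\omega$ is bounded in $L^2(\omega)$ on $F$ (after the usual corona/stopping-time decomposition to handle the scales where things fail). From $L^2(\omega)$-boundedness of the Riesz transform plus the density bounds on $F$, one gets that $\omega|_F$ is $n$-rectifiable in Federer's sense, i.e.\ carried by Lipschitz images of $\R^n$.

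The final step upgrades ``rectifiable as a measure'' (Lipschitz images) to the stronger ``covered by Lipschitz graphs'' conclusion in the statement. This is where absolute continuity $\omega\ll\HH^n$ is used again crucially, together with a result of the flavour of \cite{Az}: on an Ahlfors-type piece of an NTA-like domain, harmonic measure being absolutely continuous with respect to $\HH^n$ lets one pass from rectifiable images to graphs, because on a rectifiable set one can decompose $\HH^n$-a.e.\ (hence $\omega$-a.e.) point into pieces lying on rotated Lipschitz graphs — the classical fact that an $n$-rectifiable set in Federer's sense is, up to $\HH^n$-null sets, a countable union of Lipschitz graphs (see Mattila \cite{Ma}). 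Since $\omega\ll\HH^n$ on $F$, the $\HH^n$-null exceptional set is also $\omega$-null, so $\omega$-a.e.\ point of $F$ lies on some Lipschitz graph. But this exactly contradicts the choice of $F$ as carrying no positive-$\omega$-measure $n$-rectifiable subset. Hence $\omega|_E$ must be $n$-rectifiable.

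The main obstacle is the middle step: producing a genuinely bounded Riesz transform on $\omega|_F$ from the geometry. The subtlety is that porosity alone does not give two-sided Ahlfors regularity — we have upper density bounds on $F$ from porosity and comparability, but lower bounds and, more importantly, the connectivity needed to run the corkscrew/Green's function comparison are delicate; one must localize to a complementary component where $\omega$ lives, use the exterior corkscrew balls supplied by porosity to get quantitative estimates $\omega(B(x,r))\lesssim (r/R)^{\alpha}\omega(B(x,R))$ or capacity bounds controlling $\nabla G$, and then set up a stopping-time decomposition separating ``flat'' cubes (where the Riesz transform is controlled and contributes rectifiable pieces) from ``non-flat'' cubes (which must be $\omega$-null or $\HH^n$-null by a square-function / energy argument). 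Making this stopping-time scheme close — in particular showing that the non-flat part is negligible because otherwise it would violate either the upper growth of $\HH^n$ on $E$ or the absolute continuity $\omega\ll\HH^n$ — is the technical heart of the proof.
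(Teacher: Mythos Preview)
Your overall architecture is the same as the paper's: reduce to a compact $F\subset E$ where $\omega\approx\HH^n$, show that the maximal Riesz transform $\RR_*\omega^p$ is finite on a subset of positive $\omega^p$-measure, apply the Nazarov--Treil--Volberg $Tb$ machinery to extract a subset on which $\RR_{\omega^p}$ is bounded in $L^2$, and then invoke the Nazarov--Tolsa--Volberg rectifiability criterion. The final ``upgrade'' paragraph is fine and essentially what the paper does implicitly.

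The gap is in the middle step, which you yourself flag as the main obstacle but do not resolve. Your proposed stopping-time on \emph{flatness} (``flat cubes where the Riesz transform is controlled'' versus ``non-flat cubes which must be null by a square-function argument'') is the wrong criterion here and would not close. The paper never invokes flatness or any $\beta$-number / square-function mechanism. Its stopping rules are purely measure-theoretic: one stops at cubes $Q$ of \emph{high density} ($\omega^p(3B_Q)\geq A\,r(B_Q)^n$) or of \emph{low Frostman mass} ($\mu(Q)\leq\tau\,\omega^p(Q)$), where $\mu$ is an auxiliary $n$-dimensional Frostman measure on $F$. The high-density cubes are $\omega^p$-small because $\Theta^{n,*}(\cdot,\omega^p)<\infty$ a.e., and the low-$\mu$ cubes are $\mu$-small by construction, hence $\omega^p$-small via the comparability $\mu\lesssim\HH^n\lesssim M\,\omega^p$ on $F$.

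The second missing ingredient is how one actually bounds $|\RR\omega^p|$ at the corkscrew point $z_{B_0}$. You gesture at ``$\nabla G$ in terms of a Riesz kernel'' and ``capacity bounds'', but the specific mechanism is: if $B_0\subset\R^{n+1}\setminus\overline\Omega$ the Riesz transform is trivially $K(z_{B_0}-p)$; if $B_0\subset\Omega$ one has $\RR\omega^p(z_{B_0})=K(z_{B_0}-p)-c_n\nabla G(z_{B_0},p)$, and the crux is bounding $G(\cdot,p)$ near $z_{B_0}$. This is done via a comparison lemma (Lemma~\ref{l:w>G}) giving $G(x,p)\lesssim \omega^p(\delta^{-1}B)\,r^{1-n}/\omega^{z_{B_0}}(\delta^{-1}B)$, combined with Bourgain's estimate $\omega^{z_{B_0}}(\delta^{-1}B)\gtrsim \HH^n_\infty(\partial\Omega\cap B)/r^n\gtrsim \mu(B)/r^n$. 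It is precisely the Frostman measure $\mu$ that supplies this lower bound on $\omega^{z_{B_0}}$; without it you have no control on how small harmonic measure from the corkscrew point might be, and hence no bound on $G$ or $\nabla G$. Your proposal never introduces $\mu$, and the ``$\omega(B(x,r))\lesssim (r/R)^\alpha\omega(B(x,R))$'' decay you suggest goes in the wrong direction for this purpose. Finally, note that no lower Ahlfors bound on $\partial\Omega$ is ever obtained or needed; porosity is used only to produce the ball $B_0\subset\R^{n+1}\setminus\partial\Omega$, with both cases $B_0\subset\Omega$ and $B_0\subset\Omega^c$ handled separately.
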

\vv

We list a few observations about this result:
\begin{enumerate}
\item Theorem \ref{teo1} is local: we don't assume $\HH^{n}|_{\d\Omega}$ is a Radon measure, only on the subset $E$.
\item We don't assume any strong connectedness property like uniformity, or a uniform exterior or interior corkscrew property, which the higher dimensional results mentioned earlier all rely upon. Aside from basic connectivity in Theorem \ref{teo1}, we only need a large ball in the complement of $\d\Omega$ in each ball centered on $E\subset\partial\Omega$ with no requirement whether that ball is in $\Omega$ or its complement. 
\item Examples of domains with porous boundaries are uniform domains, John domains, interior or exterior corkscrew domains, and the complement of an $n$-Ahlfors regular set. 
\item The theorem establishes rectifiability of the measure $\omega|_{E}$ and not of the set $E$: the set $E$ may very well contain a purely $n$-unrectifiable subset, but that subset must have $\omega$-measure zero. 
%\item %During the writing of this manuscript, we received a preprint of a result of Hofmann and Martell that shows Theorem \ref{teo1} holds under the assumption that $\d\Omega$ is Ahlfors regular and harmonic measure satisfies the weak-$A_{\infty}$ condition \cite{HM}. 

%\item The assumption that $\partial \Omega$ is porous can be weakened by saying that it is porous just in $E$. See Theorem 
%\ref{teo1'} at the end of the paper.

\item As far as we know, in the case $n=1$, the theorem is also new. 
\end{enumerate}
\vspace{3mm}

The following is an easy consequence of our main result.

\begin{corollary}\label{coro1}
Suppose that $n\geq 1$, $\Omega\subset \R^{n+1}$ is a connected domain, and $E\subset \d\Omega$ is a set such that $0<\HH^{n}(E)<\infty$, $\partial\Omega$ is porous in $E$, and $\HH^{n}\ll \omega$ on $E$. Then $E$ is $n$-rectifiable.
\end{corollary}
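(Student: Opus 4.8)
The plan is to deduce the rectifiability of the set $E$ itself from the already-established rectifiability of the measure $\omega|_E$, by combining Theorem~\ref{teo1} with the hypothesis $\HH^n\ll\omega$ on $E$. First I would observe that, under the stated hypotheses, Theorem~\ref{teo1} applies verbatim: $\Omega$ is a proper (connected) domain, $0<\HH^n(E)<\infty$, and $\partial\Omega$ is porous in $E$. The remaining hypothesis of Theorem~\ref{teo1} — that $\omega|_E\ll\HH^n|_E$ — is \emph{not} assumed directly in the corollary, so the first genuine step is to produce it. This is where I would split $E$ into two pieces according to the Radon--Nikodym/Lebesgue decomposition of $\omega|_E$ with respect to $\HH^n|_E$.

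More precisely, write $E=E_{ac}\cup E_{sing}$, where on $E_{ac}$ the restriction $\omega|_E$ is absolutely continuous with respect to $\HH^n|_E$, and $E_{sing}$ carries the part of $\omega|_E$ that is singular to $\HH^n|_E$. The key point is that the hypothesis $\HH^n\ll\omega$ on $E$ forces $\HH^n(E_{sing})=0$: indeed, by definition of the singular part there is a Borel set $Z\subset E$ with $\HH^n(Z)=0$ on which $\omega|_E$ is concentrated off of $E_{ac}$, i.e. $\omega(E_{sing}\setminus Z)=0$; since $\HH^n\ll\omega$ on $E$, this gives $\HH^n(E_{sing}\setminus Z)=0$, and combined with $\HH^n(Z)=0$ we get $\HH^n(E_{sing})=0$. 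Hence, up to an $\HH^n$-null set, $E=E_{ac}$, and on $E_{ac}$ we do have $\omega|_{E_{ac}}\ll\HH^n|_{E_{ac}}$. (One should also check $\HH^n(E_{ac})>0$; but if $\HH^n(E_{ac})=0$ then $\HH^n(E)=0$, contradicting the hypothesis, so $0<\HH^n(E_{ac})<\infty$ and $\partial\Omega$ remains porous in $E_{ac}$.)

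Now apply Theorem~\ref{teo1} to $E_{ac}$: it yields that $\omega|_{E_{ac}}$ is $n$-rectifiable, i.e. there is an $n$-rectifiable set $F$ (a countable union of Lipschitz graphs) with $\omega(E_{ac}\setminus F)=0$. To pass from rectifiability of the measure to rectifiability of the set $E_{ac}$, I would use the hypothesis $\HH^n\ll\omega$ on $E$ once more in the reverse direction: since $\omega(E_{ac}\setminus F)=0$ and $\HH^n\ll\omega$ on $E\supset E_{ac}$, we get $\HH^n(E_{ac}\setminus F)=0$. Therefore $E_{ac}$ is covered, up to an $\HH^n$-null set, by the $n$-rectifiable set $F$, so $E_{ac}$ is $n$-rectifiable. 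Finally, $E=E_{ac}\cup E_{sing}$ with $\HH^n(E_{sing})=0$, and adding an $\HH^n$-null set does not affect $n$-rectifiability, so $E$ is $n$-rectifiable, as claimed.

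I do not expect any serious obstacle here — the corollary is, as the text says, ``an easy consequence'' — but the one point requiring care is the bookkeeping of the decomposition $E=E_{ac}\cup E_{sing}$ and the two uses of $\HH^n\ll\omega$: once to discard $E_{sing}$ as $\HH^n$-negligible \emph{before} invoking Theorem~\ref{teo1}, and once \emph{after} invoking it to upgrade $\omega(E_{ac}\setminus F)=0$ to $\HH^n(E_{ac}\setminus F)=0$. It is worth noting that the hypothesis $\HH^n\ll\omega$ is genuinely used in both directions, and that one cannot simply quote Theorem~\ref{teo1} on all of $E$ without first arranging absolute continuity of $\omega$ on the relevant piece.
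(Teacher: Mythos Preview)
Your proof is correct and follows essentially the same route as the paper's: the paper also passes (``by standard arguments'') to a subset $E'\subset E$ with $\HH^n(E\setminus E')=0$ on which $\omega\ll\HH^n\ll\omega$, applies Theorem~\ref{teo1} to $E'$, and then uses $\HH^n\ll\omega$ to upgrade $n$-rectifiability of $\omega|_{E'}$ to $n$-rectifiability of $E'$ and hence of $E$. Your $E_{ac}$ is precisely the paper's $E'$, and you have simply spelled out the ``standard arguments'' in more detail.
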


Indeed, by standard arguments, there is $E'\subset E$ such that $\HH^{n}(E\backslash E')=0$ and $\omega \ll \HH^{n}\ll \omega$ on $E'$. By Theorem \ref{teo1}, $\omega|_{E'}$ is $n$-rectifiable, but since $\HH^{n}\ll \omega$ on $E$, we also have that $E'$ is $n$-rectifiable, and thus $E$ is $n$-rectifiable.

\vv
We also mention that from Theorem \ref{teo1} in combination with the results of \cite{Az} we obtain the next corollary.

\begin{corollary}
Let $\Omega\subset \R^{n+1}$ be an NTA domain, $n\geq 1$, and let $E\subset \d\Omega$ be such that $0<\HH^{n}(E)<\infty$. Then $\omega|_{E}\ll \HH^{n}|_{E}$ if and only if $E$ may be covered by countably many $n$-dimensional Lipschitz graphs up to a set of $\omega$-measure zero.
\end{corollary}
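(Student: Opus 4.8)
The plan is to reduce the problem to the main theorem (Theorem \ref{teo1}) combined with the rectifiability result of \cite{Az}. Recall that \cite{Az} asserts: for an NTA domain $\Omega \subset \R^{n+1}$, if $\Gamma \subset \d\Omega$ is an $n$-Ahlfors regular set, then $\omega \ll \HH^n$ on $\d\Omega \cap \Gamma$, and moreover $\omega|_{\d\Omega \cap \Gamma}$ is supported on an $n$-rectifiable set.

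\textit{Proof of the corollary.} First observe that any NTA domain $\Omega \subset \R^{n+1}$ has exterior corkscrews, and in particular $\d\Omega$ is porous (every ball $B$ centered at $\d\Omega$ of radius at most $\diam\, \d\Omega$ contains a corkscrew ball $B' \subset (\cnj\Omega)^\circ \subset \R^{n+1}\setminus\d\Omega$ with $r(B') \sim r(B)$). Hence $\d\Omega$ is porous in $E$, and Theorem \ref{teo1} applies to any $E \subset \d\Omega$ with $0 < \HH^n(E) < \infty$.

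\textit{($\Rightarrow$)} Suppose $\omega|_E \ll \HH^n|_E$. By Theorem \ref{teo1}, $\omega|_E$ is $n$-rectifiable: there is a countable family of $n$-dimensional Lipschitz graphs $\{\Gamma_j\}_j$ and a set $N \subset E$ with $\omega(N) = 0$ such that $E \setminus N \subset \bigcup_j \Gamma_j$. This is exactly the stated conclusion.

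\textit{($\Leftarrow$)} Conversely, suppose $E = N \cup \bigcup_j (E \cap \Gamma_j)$ with $\omega(N) = 0$ and each $\Gamma_j$ an $n$-dimensional Lipschitz graph. A Lipschitz graph is $n$-Ahlfors regular (indeed $\HH^n(B(x,r)\cap \Gamma_j) \sim r^n$ for balls centered on $\Gamma_j$), so by the result of \cite{Az} we have $\omega \ll \HH^n$ on $\d\Omega \cap \Gamma_j$ for each $j$. Thus for any Borel $A \subset E$ with $\HH^n(A) = 0$ we get $\HH^n(A \cap \Gamma_j) = 0$, hence $\omega(A \cap \Gamma_j) = 0$ for every $j$; together with $\omega(A \cap N) \leq \omega(N) = 0$ and countable subadditivity this gives $\omega(A) = 0$. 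Therefore $\omega|_E \ll \HH^n|_E$. \hfill $\square$

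The argument is essentially bookkeeping once the two external inputs are in place; the only point requiring a little care is verifying that an NTA domain is porous in the sense of Theorem \ref{teo1} (which follows immediately from the exterior corkscrew condition) and that $n$-dimensional Lipschitz graphs fall under the Ahlfors regularity hypothesis of \cite{Az} (which is standard). There is no real obstacle here beyond correctly quoting the cited results; the substance of the corollary lies entirely in Theorem \ref{teo1}.
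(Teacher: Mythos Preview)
Your proof is correct and follows essentially the same approach as the paper: the forward direction is Theorem \ref{teo1} (using that NTA domains have exterior corkscrews and are therefore porous), and the reverse direction is the result of \cite{Az} applied to each Lipschitz graph, which is $n$-Ahlfors regular. Your write-up merely spells out the countable subadditivity step that the paper leaves implicit.
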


The forward direction is just a consequence of Theorem \ref{teo1}, and the reverse direction follows from the result from \cite{Az} as described earlier since $n$-dimensional Lispchitz graphs are $n$-Ahlfors regular. 

\vv
During the preparation of this manuscript, we received a preprint by Hofmann and Martell \cite{HM} that shows that the
result from \cite{HMU} described above holds not only for uniform domains, but also for domains which
are complements of Ahlfors regular sets, again under the assumption that harmonic measure is weak-$A_{\infty}$. 
We  
thank Steve Hofmann for making his joint work available to us.
We remark that our method of proof of Theorem \ref{teo1} is completely independent of the techniques in \cite{HM} and previous works such as \cite{HMU}.
We also mention that after having written a first version of the present paper, Jos\'e Mar\'{\i}a Martell informed us that in a joint work with Akman, Badger and  Hofmann in preparation, they have obtained some result in the spirit of Corollary \ref{coro1} under some stronger assumptions (in particular, assuming $\partial \Omega$ to be Ahlfors regular). 
%In a second version of our work we included Corollary \ref{coro1} to show that this follows as a straightforward consequence of our main Theorem \ref{teo1}.

%By the discussion above, it is clear that Corollary \ref{coro1}
%is a straightforward consequence of our main Theorem \ref{teo1}.

\vv
\section{Some notation}

We will write $a\lesssim b$ if there is $C>0$ so that $a\leq Cb$ and $a\lesssim_{t} b$ if the constant $C$ depends on the parameter $t$. We write $a\sim b$ to mean $a\lesssim b\lesssim a$ and define $a\sim_{t}b$ similarly.

For sets $A,B\subset \R^{n+1}$, we let 
\[\dist(A,B)=\inf\{|x-y|:x\in A,y\in B\}, \;\; \dist(x,A)=\dist(\{x\},A),\]
We denote the open ball of radius $r$ centered at $x$ by $B(x,r)$. For a ball $B=B(x,r)$ and $\delta>0$ we write $r(B)$ for its radius and $\delta B=B(x,\delta r)$. We let $U_\ve (A)$ to be the $\ve$-neighborhood of a set $A\subset \R^{n+1}$. For $A\subset \R^{n+1}$ and $0<\delta\leq\infty$, we set
\[\HH^{n}_{\delta}(A)=\inf\left\{\textstyle{ \sum_i \diam(A_i)^n: A_i\subset\R^{n+1},\,\diam(A_i)\leq\delta,\,A\subset \bigcup_i A_i}\right\}.\]
Define the {\it $n$-dimensional Hausdorff measure} as
\[\HH^{n}(A)=\lim_{\delta\downarrow 0}\HH^{n}_{\delta}(A)\]
and the {\it $n$-dimensional Hausdorff content} as $\HH^{n}_{\infty}(A)$. See Chapter 4 of \cite{Ma} for more details. \\

Given a signed Radon measure $\nu$ in $\R^{n+1}$ we consider the $n$-dimensional Riesz
transform
$$\RR\nu(x) = \int \frac{x-y}{|x-y|^{n+1}}\,d\nu(y),$$
whenever the integral makes sense. For $\ve>0$, its $\ve$-truncated version is given by 
$$\RR_\ve \nu(x) = \int_{|x-y|>\ve} \frac{x-y}{|x-y|^{n+1}}\,d\nu(y).$$
For $\delta\geq0$
 we set
$$\RR_{*,\delta} \nu(x)= \sup_{\ve>\delta} |\RR_\ve \nu(x)|.$$
We also consider the maximal operator
$$M^n_\delta\nu(x) = \sup_{r>\delta}\frac{|\nu|(B(x,r))}{r^n},$$
In the case $\delta=0$ we write $\RR_{*} \nu(x):= \RR_{*,0} \nu(x)$ and $M^n\nu(x):=M^n_0\nu(x)$.
%If $\mu$ is a fixed positive Radon measure and $f\in L^1_{loc}(\mu)$, the centered maximal Hardy-Littlewood operator applied to $f$ is 
%$$M_\mu f(x) = \sup_{r>0}\frac1{\mu(B(x,r))}\int_{B(x,r)}|f|\,d\mu.$$

\vv
\section{The strategy}

We fix a point $p\in\Omega$ far from the boundary to be specified later.
To prove that $\omega^p|_E$ is rectifiable we will show that any subset of positive harmonic measure 
of $E$ contains another subset $G$ of positive harmonic measure such that $\RR_*\omega^p(x)<\infty$ in $G$.
Applying a deep theorem essentially due to Nazarov, Treil and Volberg, one deduces that $G$ contains yet another
subset $G_0$ of positive harmonic measure such that $\RR_{\omega^p|_{G_0}}$ is bounded in $L^2(\omega^p|_{G_0})$. Then from
the results of Nazarov, Tolsa and Volberg in \cite {NToV} and \cite{NToV-pubmat}, it follows that $\omega^p|_{G_0}$ is $n$-rectifiable.
This suffices to prove the full $n$-rectifiability of $\omega^p|_E$.

One of the difficulties of Theorem \ref{teo1} is due to the fact that the non-Ahlfors  regularity of $\partial\Omega$ makes it difficult to apply some usual tools from potential of theory, such as the ones developed by Aikawa in \cite{Ai1} and \cite{Ai2}. In our proof we solve this issue by applying some stopping time arguments involving the harmonic measure and a suitable Frostman measure. 

The connection between harmonic measure and the Riesz transform is already used, at least implicitly, in the work of Hofmann, Martell and Uriarte-Tuero \cite{HMU}, and more explicitely in the paper by Hofmann, Martell and Mayboroda \cite{HMM}.
Indeed, in \cite{HMU}, in order to prove the uniform rectifiability of $\partial\Omega$,
the authors rely on the study of a square function related to the double gradient of the single layer potential and the application of an appropriate rectifiability criterion due to David and Semmes \cite{DS}. Note that the gradient of the single layer potential coincides with the Riesz transform away from the boundary.

We think that the Riesz transform is a much more flexible tool than the square function used in \cite{HMU}. Indeed, to work with the
Riesz transform with minimal regularity assumptions one can apply the techniques developed in the last so many years in the area of the so-called non-homogeneous Calder\'on-Zygmund theory. However, it is not clear to us if the aforementioned square function behaves
reasonably well without strong assumptions such as the $n$-Ahlfors regularity of $\partial \Omega$.

\vv

\section{Harmonic and Frostman measures}

We start by reviewing a result of Bourgain from \cite{Bo}. 

\begin{lemma}
\label{lembourgain}
There is $\delta_{0}>0$ depending only on $n\geq 1$ so that the following holds for $\delta\in (0,\delta_{0})$. Let $\Omega\subsetneq \R^{n+1}$ be a domain, $\xi \in \partial \Omega$, $r>0$, $B=B(\xi,r)$, and set $\rho:=\mathcal H_\infty^{s}(\partial\Omega\cap \delta B)/(\delta r)^{s}$ for some $s>n-1$. Then 
\[ \omega_{\Omega}^{x}(B)\gtrsim_{n} \rho\; \mbox{  for all }x\in \delta B.\]
\end{lemma}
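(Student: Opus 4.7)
My plan is to build a Frostman measure on $\partial\Omega\cap\delta B$, use its Newtonian potential as a harmonic comparison function, and deduce the lower bound on $\omega_\Omega^x(B)$ from the maximum principle. By Frostman's lemma applied to $\partial\Omega\cap\delta B$, I obtain a positive Borel measure $\mu$ supported on $\partial\Omega\cap\delta B$ with $\mu(B(y,t))\le t^s$ for every $y,t$ and total mass $\mu(\R^{n+1})\gtrsim \HH^s_\infty(\partial\Omega\cap\delta B)=\rho(\delta r)^s$. Assuming first that $n\ge 2$, I consider the Newtonian potential
\[
U^\mu(y):=\int |y-z|^{-(n-1)}\,d\mu(z),
\]
which is nonnegative, upper semicontinuous, superharmonic on $\R^{n+1}$, and harmonic in the open set $\R^{n+1}\setminus\supp\mu\supset\Omega$.

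A standard layer-cake computation using the Frostman condition (with the hypothesis $s>n-1$ supplying integrability near $\supp\mu$) produces three pointwise estimates that drive the comparison:
\begin{enumerate}[(i)]
\item $U^\mu(y)\lesssim (\delta r)^{s-n+1}$ for every $y\in\R^{n+1}$;
\item $U^\mu(y)\lesssim \mu(\R^{n+1})\,r^{-(n-1)}\sim \rho\,\delta^s r^{s-n+1}$ for $y\notin B$, since $|y-z|\gtrsim r$ whenever $z\in\supp\mu\subset\delta B$;
\item $U^\mu(x)\gtrsim \rho(\delta r)^{s-n+1}$ for $x\in\delta B$, since $|x-z|\le 2\delta r$ throughout $\supp\mu$.
\end{enumerate}
Taking $M:=C(\delta r)^{s-n+1}$ and $N:=C\rho\delta^s r^{s-n+1}$ with $C$ absorbing the constants in (i)--(ii), I set
\[
v(y):=U^\mu(y)-M\,\omega_\Omega^y(B)-N,
\]
which is harmonic in $\Omega$. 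At a regular boundary point $z\in\partial\Omega\cap B$ one has $\limsup_{y\to z}v(y)\le U^\mu(z)-M\le 0$ by (i), while at a regular $z\in\partial\Omega\setminus\overline{B}$ one has $\limsup_{y\to z}v(y)\le U^\mu(z)-N\le 0$ by (ii). Since the irregular boundary points form a polar set (Kellogg) which also carries no $\mu$-mass, the maximum principle forces $v\le 0$ throughout $\Omega$.

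Evaluating at $x\in\delta B\cap\Omega$ and inserting (iii) gives
\[
c\,\rho(\delta r)^{s-n+1}\le U^\mu(x)\le M\,\omega_\Omega^x(B)+N=C(\delta r)^{s-n+1}\,\omega_\Omega^x(B)+C\rho\delta^s r^{s-n+1};
\]
after dividing by $(\delta r)^{s-n+1}$ this rearranges to $c\rho\le C\,\omega_\Omega^x(B)+C\rho\delta^{n-1}$, and choosing $\delta_0$ so small that $C\delta^{n-1}\le c/2$ yields $\omega_\Omega^x(B)\gtrsim \rho$. The hard part will be the borderline case $n=1$ (ambient dimension two): the kernel $|y-z|^{-(n-1)}$ degenerates to a constant and the argument above collapses, so one must replace $U^\mu$ by a logarithmic potential such as $\int\log(Ar/|y-z|)\,d\mu(z)$ for an appropriate scale $A\sim 1$, and re-derive logarithmic analogues of (i)--(iii), now using $s>0=n-1$ precisely for Frostman integrability. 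A secondary technical point, again sharper when $n=1$, is the absence of decay of $U^\mu$ at infinity on possibly unbounded $\Omega$; I would handle this by exhausting $\Omega$ with $\Omega\cap B(\xi,R)$ and passing to the limit using monotonicity of harmonic measure, the decay at infinity needed to run the maximum principle being automatic once $n\ge 2$.
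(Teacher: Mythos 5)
Your proposal is correct and follows essentially the same route as the paper: a Frostman measure on $\partial\Omega\cap\delta B$, the Riesz/Newtonian potential of that measure, the same three size estimates (global upper bound, smallness off $B$, lower bound on $\delta B$ using $s>n-1$), and a maximum-principle comparison with $\omega_\Omega^{\cdot}(B)$, with the logarithmic kernel reserved for $n=1$. The only difference is presentational (you subtract $M\,\omega_\Omega^y(B)+N$ directly, whereas the paper normalizes the potential into a subharmonic minorant for the Perron solution), so there is nothing substantive to add.
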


\begin{proof}
We only prove the case $n\geq 2$, the $n=1$ case is similar, although one uses $-\log|\cdot|$ instead of $|\cdot|^{1-n}$ to define Green's function.

Without loss of generality, we assume $\xi=0$ and $r=1$. Let $\mu$ be a Frostman measure supported in $\delta B\cap \partial\Omega$ so that 
\begin{itemize}
\item $\mu(B(x,r))\leq r^{s}$ for all $x\in \R^{n+1}$ and $r>0$, 
\item  $\rho\delta^{s} \geq \mu(\delta B\cap \partial\Omega)\geq c\rho\delta^{s}$ where $c=c(n)>0$.
\end{itemize}

Define a function
\[
u(x)=\int\frac1{|x-y|^{n-1}}\,d\mu(y),\]
which is harmonic out of $\supp\mu$ and satisfies the following properties:
\begin{enumerate}[(i)]
\item For $x\in \delta B$,
\[ u(x)\geq 2^{1-n}\delta^{1-n}\mu(\delta B)\geq c2^{1-n}\delta^{s-n+1}\rho.\]
\item For $x\in \delta B$,
\[u(x)
\leq \sum_{j=0}^{\infty} \int_{\delta 2^{-j}\leq |x-y|<\delta 2^{-j+1}} \frac1{|x-y|^{n-1}}\,d\mu(x)
\leq \sum_{j=0}^{\infty}(2^{-j+1}\delta)^{s}(2^{-j}\delta)^{1-n}\sim \delta^{s-n+1}.\]
\item For $x\in B^{c}$,
\[u(x)=\int \frac1{|x-y|^{n-1}}\,d\mu(x)
\leq 2^{n-1}\mu(\delta B)\leq 2^{n-1}\rho\delta^{s}.\]
\item Thus, by the maximum principle, we have that $u(x)\lesssim \delta^{s-n+1}$ for all $x\in \R^{n+1}$. 
\end{enumerate}

Set 
\[ v(x)=\frac{u(x)-\sup_{\partial B}u}{\sup u}.\]
Then
\begin{enumerate}[(a)]
\item $v$ is harmonic in $(\delta B\cap \partial\Omega)^{c}$,
\item $v\leq 1$,
\item $v\leq 0$ on $B^{c}$,
\item for $x\in \delta B$ and $\delta$ small enough,
\[v(x) \gtrsim  \frac{c\delta^{s+1-n}\rho-2^{n-1}\rho\delta^{s}}{c\delta^{s-n+1}}
\gtrsim_\delta \rho.\]
\end{enumerate}
Let $\phi$ be any continuous compactly supported function equal to $1$ on $B$. Then $\int \phi d\omega_{\Omega}^{x}$ is at least any subharmonic function $f$ with $\limsup_{x\in \Omega \rightarrow \xi} f(x)\leq \phi(\xi)$. The function $v$ satisfies this, and so we have $\int \phi d\omega_{\Omega}^{x}\geq v(x)$. Taking the infimum over all such $\phi$, we get that $\omega_{\Omega}^{x}(B)\geq v(x)$, and the lemma follows.
%
%For the case $n=1$ we just mention the necessary modifications since the rest of the proof will be the same. If $\mu$ is the same measure as above, we will define
%$$\hat u (x) = \int \log |x-y|^{-1} \,d\mu(y),  $$
\end{proof}
\vv

The proof of the next lemma is fairly standard but we include it for the sake of completeness. 

\begin{lemma}\label{l:w>G}
Let $\Omega\subsetneq \R^{n+1}$ be an open Greenian domain, $n\geq 1$, $\xi \in \partial\Omega$, $r>0$ and $B:=B(\xi,r)$.  Suppose that there exists a point $x_B \in \Omega$ so that the ball $B_0:=B(x_{B},r/C)$ satisfies $4B_0\subset \Omega\cap B$ for some $C>1$. Then, for $n\geq 2$,
\begin{equation}\label{eq:Green-lowerbound}
 \omega_{\Omega}^{x}(B)\gtrsim \omega_{\Omega}^{x_{B}}(B)\, r^{n-1}\, G_{\Omega}(x,x_{B})\,\, \,\,\,\,\,\,\text{for all}\,\, x\in \Omega\backslash  B_0.
 \end{equation}
In the case $n=1$, 
\begin{equation}\label{eq:Green-lowerbound2}
 \omega_{\Omega}^{x}(B)\gtrsim \omega_{\Omega}^{x_{B}}(B)\, \bigl|G_{\Omega}(x,x_{B})-
 G_{\Omega}(x,z)\bigr|\quad \mbox{for all $x\in\Omega\setminus B_0$ and $z\in \frac12B_0$.}
 \end{equation}

\end{lemma}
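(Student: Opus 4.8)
The plan is to compare, via the maximum principle on $U:=\Omega\setminus\overline{B_0}$, the harmonic function $u(x):=\omega_\Omega^x(B)$ (harmonic on $\Omega$, with $0\le u\le1$) against the appropriate Green-function object, using $\d B_0$ and $\d\Omega$ as the two portions of $\d U$. The first ingredient is a Harnack estimate near $B_0$: since $\overline{B_0}\subset 4B_0\subset\Omega$, every $t\in\overline{B_0}$ has $\dist(t,\d\Omega)\ge 3r/C$, so any two points of $\overline{B_0}$ are joined by a Harnack chain of boundedly many balls contained in $\Omega$; applying Harnack's inequality to $u\ge0$ gives $u(x)\sim_n\omega_\Omega^{x_B}(B)$ for every $x\in\overline{B_0}$, in particular on $\d B_0$.

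For $n\ge2$, fix a small $c_0=c_0(n,C)>0$ and put $w:=u-c_0\,\omega_\Omega^{x_B}(B)\,r^{n-1}\,G_\Omega(\cdot,x_B)$, which is harmonic on $U$ since $x_B\in B_0$. On $\d B_0$ one has $G_\Omega(x,x_B)\le c_n|x-x_B|^{1-n}\lesssim_{n,C}r^{1-n}$, so by the Harnack step $w\ge0$ on $\d B_0$ for $c_0$ small, and the same computation shows \eqref{eq:Green-lowerbound} directly for $x\in\d B_0$. On $\d\Omega$, $G_\Omega(\cdot,x_B)\to0$ at every regular boundary point (the irregular ones forming a polar set) while $u\ge0$, so $\liminf_{x\to\zeta}w(x)\ge0$ for quasi-every $\zeta\in\d\Omega$; moreover $0\le G_\Omega(\cdot,x_B)\lesssim_{n,C}r^{1-n}$ on $U$, and if $\Omega$ is unbounded then $G_\Omega(x,x_B)\to0$ as $|x|\to\infty$. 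Thus $w$ is a harmonic function on $U$ that is bounded below and has nonnegative lower limits quasi-everywhere on $\d U$ and at infinity, so the minimum principle gives $w\ge0$ on $U$; together with the case $x\in\d B_0$ this yields \eqref{eq:Green-lowerbound}.

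For $n=1$, $G_\Omega(\cdot,x_B)$ need not tend to $0$ at infinity when $\Omega$ is unbounded, and this is precisely why the statement uses the dipole $h:=G_\Omega(\cdot,x_B)-G_\Omega(\cdot,z)$ with $z\in\tfrac12B_0$: since the two logarithmic poles lie in $B_0$ and are of opposite sign, $h$ extends to a \emph{bounded} harmonic function on $U$, whose boundary values on $\d\Omega$ are $\lesssim1$ because $|x_B-z|\le r/(2C)$ is much smaller than $\dist(x_B,\d\Omega)\ge4r/C$, and which tends to $0$ at every regular point of $\d\Omega$; an elementary computation also gives $|h|\lesssim1$ on $\d B_0$. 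Then, with $c_0=c_0(1)>0$ small, the bounded harmonic functions $w^{\pm}:=u\pm c_0\,\omega_\Omega^{x_B}(B)\,h$ on $U$ are $\ge0$ on $\d B_0$ (by the Harnack step) and have nonnegative boundary traces quasi-everywhere on $\d\Omega$; being bounded, their harmonic-measure representations do not see the polar set of irregular boundary points — which includes the point at infinity whenever it is irregular — so $w^{\pm}\ge0$ on $U$. Hence $u\ge c_0\,\omega_\Omega^{x_B}(B)\,|h|$ on $U$, and on $\d B_0$ this again follows from the Harnack step and $|h|\lesssim1$; this proves \eqref{eq:Green-lowerbound2}.

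The main difficulty is exactly the control at infinity in the case $n=1$ with $\Omega$ unbounded: replacing $G_\Omega(\cdot,x_B)$ by the dipole $G_\Omega(\cdot,x_B)-G_\Omega(\cdot,z)$ renders the comparison function bounded, so its (possibly irregular) behaviour at infinity becomes invisible to harmonic measure. The remaining points — the bounded Harnack chain in $B_0$, the bounds $G_\Omega(x,x_B)\lesssim r^{1-n}$ and $|h|\lesssim1$ on $\d B_0$, and the vanishing of $G_\Omega$ (resp.\ $h$) quasi-everywhere on $\d\Omega$ — are standard facts of classical potential theory.
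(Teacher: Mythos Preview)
Your argument is correct and essentially identical to the paper's: both compare $\omega_\Omega^x(B)$ against $G_\Omega(\cdot,x_B)$ (respectively the dipole $G_\Omega(\cdot,x_B)-G_\Omega(\cdot,z)$ for $n=1$) on $\Omega\setminus B_0$, using Harnack on $\partial B_0$, the Green-function bounds there, vanishing quasi-everywhere on $\partial\Omega$, and a minimum principle ignoring polar exceptional sets. The only cosmetic difference is that in the $n=1$ case the paper subtracts $|h|$ directly and invokes superharmonicity, whereas you split into the two harmonic functions $w^\pm$; your discussion of why the dipole is needed (control at infinity) is also more explicit than the paper's.
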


Note that the class of domains considered in Theorem \ref{teo1} are Greenian. Indeed, all open subsets of $\R^{n+1}$ are Greenian for $n\geq 2$ (Theorem 3.2.10 \cite{Hel}), and in the plane, if $\HH^{1}(\d\Omega)>0$, then $\d\Omega$ is nonpolar (p. 207 Theorem 11.14 of \cite{HKM}) and domains with nonpolar boundaries are Greenian by Myrberg's Theorem (see Theorem 5.3.8 on p. 133 of \cite{AG}). For the definitions of Greenian and polar sets, see \cite{Hel}.

\begin{proof}
%Note that by Harnack's inequality, we have that for $x\in \d B(x_{B},\frac{r}{2C})$, 
%\[u_{\phi}(x) \gec \omega_{\Omega}^{x_{B}}(B)\gec r^{d-1} G_{\Omega}(x,x_{B}).\]
%Let 
First we consider the case $n\geq 2$.
Without loss of generality, we assume that $\omega^{x_B}(B)>0$ since otherwise \eqref{eq:Green-lowerbound} is trivial. We define a new domain $\Omega' := \Omega \setminus B_0 \subset \Omega$. From the definition of the Green function we have 
\begin{equation}\label{greenclaim}
G_{\Omega}(x,x_{B})\lesssim r(B)^{1-n} \mbox{ for }x\in \d B_{0}.
\end{equation}
 Since the set of Wiener irregular boundary points is polar (Corollary 4.5.5 \cite{Hel}), it holds that $G_\Omega(x,x_B)=0$ for all $x\in \partial \Omega' \cap \partial \Omega$ apart from a polar set. Moreover, for $x\in\partial B_0$ we have from \eqref{greenclaim} that
\begin{equation*}
G_\Omega(x,x_B) \leq c_0 \,\frac1{|x-x_B|^{n-1}} \leq c_1  \frac{\omega^x(B)}{r^{n-1\,}\omega^{x_B}(B)} ,
\end{equation*}
for some purely dimensional constant $c_1>0$, where the fact that $\omega^x(B)/\omega^{x_B}(B)\sim 1$ follows from the standard interior Harnack's inequality for $2B_0$.

Define now $u(x)=c_1 r^{1-n} \omega^x(B)/\omega^{x_B}(B) - G_\Omega(x,x_B) $ for all $x\in \Omega' \cup \partial \Omega'$, which is harmonic in $\Omega'$. Using that $G_\Omega(x,x_B) \lesssim |x-x_B|^{1-n} \lesssim r^{1-n}$ for any $x \in \Omega'$, we obtain that $u \geq - c_2 r^{1-n}$ in $\Omega'$. Therefore, by \cite[Theorem 4.2.21]{Hel}, in view of the fact that $u$ is harmonic and bounded below in $\Omega'$, $u \geq 0$ on $\partial \Omega'$ except for a polar set, and $\liminf_{|x| \to \infty} u(x) \geq 0$, we conclude \eqref{eq:Green-lowerbound}.

\vv

Now we deal with the case $n=1$. 
Again  we assume that $\omega^{x_B}(B)>0$ and we take $\Omega' = \Omega \setminus B_0 \subset \Omega$, as above. From the definition of the Green function, for $x\in\d B_0$ and $z\in \frac12B_0$ we have 
\begin{equation}\label{greenclaim*}
\bigl|G_{\Omega}(x,x_{B}) - G_{\Omega}(x,z)\bigr| = \left|\log\frac{|x-z|}{|x-x_B|} - \int \log\frac{|\xi-z|}{|\xi-x_B|}\,d\omega^x(\xi)\right|
\lesssim 1,
\end{equation}
since 
$$\frac{|x-z|}{|x-x_B|}\approx \frac{|\xi-z|}{|\xi-x_B|}\approx 1\quad \mbox{for $x\in\d B_0$, $\xi\in\d\Omega$, and $z\in
\frac12B_0$.}$$
Arguing as in the case $n\geq2$, we deduce that 
\begin{equation*}
\bigl|G_{\Omega}(x,x_{B}) - G_{\Omega}(x,z)\bigr| \leq c_0'  \leq c_1'  \frac{\omega^x(B)}{\omega^{x_B}(B)} ,
\end{equation*}
for some absolute constant $c_1'>0$, where the fact that $\omega^x(B)/\omega^{x_B}(B)\sim 1$ follows from the standard interior Harnack's inequality for $2B_0$.

For $x\in \Omega' \cup \partial \Omega'$ and a fixed $z\in\partial\frac12 B_0$, consider the function
$$u(x)=c_1' \frac{\omega^x(B)}{\omega^{x_B}(B)} - \bigl|G_\Omega(x,x_B)- G_{\Omega}(x,z)\bigr|.$$
This is superharmonic in $\Omega'$ and uniformly bounded. Therefore, since $u \geq 0$ on $\partial \Omega'$ except for a polar set and $\liminf_{|x| \to \infty} u(x) \geq 0$, we obtain \eqref{eq:Green-lowerbound2}.
\end{proof}

\vvv

From now on, $\Omega$ and $E$ will be as in Theorem \ref{teo1}. 
Also, fix a point $p\in\Omega$ and consider the harmonic measure $\omega^p$ of $\Omega$ with pole at $p$. 
The reader may think that $p$ is point deep inside $\Omega$.

The Green
function of $\Omega$ will be denoted just by $G(\cdot,\cdot)$.

Let $g\in L^1(\omega^p)$ be such that
$$\omega^p|_E = g\,\HH^n|_{\partial\Omega}.$$
Given $M>0$, let 
$$E_M= \{x\in\partial\Omega:M^{-1}\leq g(x)\leq M\}.$$
Take $M$ big enough so that $\omega^p(E_M)\geq \omega^p(E)/2$, say.
Consider an arbitrary compact set $F_M\subset E_M$ with $\omega^p(F_M)>0$. We will show that there exists $G_0\subset F_M$
with $\omega^p(G_0)>0$ which is $n$-rectifiable. Clearly, this suffices to prove that $\omega^p|_{E_M}$ is $n$-rectifiable,
and letting $M\to\infty$ we get the full $n$-rectifiability of $\omega^p|_E$.

Let $\mu$ be an $n$-dimensional Frostman measure for $F_M$. That is, $\mu$ is a non-zero Radon measure supported on $F_M$
such that 
$$\mu(B(x,r))\leq C\,r^n\qquad \mbox{for all $x\in\R^{n+1}$.}$$
Further, by renormalizing $\mu$, we can assume that $\|\mu\|=1$. Of course the constant $C$ above will depend on 
$\HH^n_\infty(F_M)$, and the same may happen for all the constants $C$ to appear,  but this will not bother us. Notice that $\mu\ll\HH^n|_{F_M}\ll \omega^p$. In fact, for any set $H\subset F_M$,
\begin{equation}\label{Frostman}
\mu(H)\leq C\,\HH^n_\infty(H)\leq C\,\HH^n(H)\leq C\,M\,\omega^p(H).
\end{equation}

\vv

\section{The dyadic lattice of David and Mattila}\label{secdya}

Now we will consider the dyadic lattice of cubes
with small boundaries of David-Mattila associated with $\omega^p$. This lattice has been constructed in \cite[Theorem 3.2]{David-Mattila} (with $\omega^p$ replaced by a general Radon measure). 
Its properties are summarized in the next lemma.

\begin{lemma}[David, Mattila]
\label{lemcubs}
Consider two constants $C_0>1$ and $A_0>5000\,C_0$ and denote $W=\supp\omega^p$. Then there exists a sequence of partitions of $W$ into
Borel subsets $Q$, $Q\in \DD_k$, with the following properties:
\begin{itemize}
\item For each integer $k\geq0$, $W$ is the disjoint union of the ``cubes'' $Q$, $Q\in\DD_k$, and
if $k<l$, $Q\in\DD_l$, and $R\in\DD_k$, then either $Q\cap R=\varnothing$ or else $Q\subset R$.
\vv

\item The general position of the cubes $Q$ can be described as follows. For each $k\geq0$ and each cube $Q\in\DD_k$, there is a ball $B(Q)=B(z_Q,r(Q))$ such that
$$z_Q\in W, \qquad A_0^{-k}\leq r(Q)\leq C_0\,A_0^{-k},$$
$$W\cap B(Q)\subset Q\subset W\cap 28\,B(Q)=W \cap B(z_Q,28r(Q)),$$
and
$$\mbox{the balls\, $5B(Q)$, $Q\in\DD_k$, are disjoint.}$$

\vv
\item The cubes $Q\in\DD_k$ have small boundaries. That is, for each $Q\in\DD_k$ and each
integer $l\geq0$, set
$$N_l^{ext}(Q)= \{x\in W\setminus Q:\,\dist(x,Q)< A_0^{-k-l}\},$$
$$N_l^{int}(Q)= \{x\in Q:\,\dist(x,W\setminus Q)< A_0^{-k-l}\},$$
and
$$N_l(Q)= N_l^{ext}(Q) \cup N_l^{int}(Q).$$
Then
\begin{equation}\label{eqsmb2}
\omega^p(N_l(Q))\leq (C^{-1}C_0^{-3d-1}A_0)^{-l}\,\omega^p(90B(Q)).
\end{equation}
\vv

\item Denote by $\DD_k^{db}$ the family of cubes $Q\in\DD_k$ for which
\begin{equation}\label{eqdob22}
\omega^p(100B(Q))\leq C_0\,\omega^p(B(Q)).
\end{equation}
We have that $r(Q)=A_0^{-k}$ when $Q\in\DD_k\setminus \DD_k^{db}$
and
\begin{equation}\label{eqdob23}
\omega^p(100B(Q))\leq C_0^{-l}\,\omega^p(100^{l+1}B(Q))\quad
\mbox{for all $l\geq1$ such that $100^l\leq C_0$ and $Q\in\DD_k\setminus \DD_k^{db}$.}
\end{equation}
\end{itemize}
\end{lemma}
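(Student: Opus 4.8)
The plan is to obtain Lemma~\ref{lemcubs} by directly invoking the construction of David and Mattila. In \cite[Theorem 3.2]{David-Mattila} it is shown that, for \emph{every} Radon measure on $\R^{n+1}$ and every pair of constants $C_0>1$ and $A_0>5000\,C_0$, one can build a sequence of partitions of the support into ``cubes'' enjoying precisely the four bulleted properties listed above (nesting, the ball structure $B(Q)\subset Q\subset 28B(Q)$ with comparable radii and disjointness of the dilates $5B(Q)$, the small-boundary estimate \eqref{eqsmb2}, and the doubling dichotomy \eqref{eqdob22}--\eqref{eqdob23}). So the entire content of the lemma is already in their paper, and the only thing one has to check is that $\omega^p$ is a Radon measure with $W=\supp\omega^p$; this is immediate, since $\omega^p$ is (locally) finite and Borel, being a probability measure on $\partial\Omega$ when $\Omega$ is bounded and a locally finite measure in general. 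Thus I would simply apply \cite[Theorem 3.2]{David-Mattila} to $\sigma=\omega^p$.

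For the reader's convenience I would recall the mechanism of the construction. For each integer $k\geq 0$ one selects a maximal $A_0^{-k}$-separated subset $\{z_Q\}$ of $W$, so that the balls $B(z_Q,A_0^{-k})$ cover $W$ while the balls $\tfrac12B(z_Q,A_0^{-k})$ are disjoint; refining the previous generation by a Voronoi-type assignment of points to nearest net centers already produces the nesting and the ball structure, after adjusting radii within the factor $C_0$. The delicate point is the small-boundary estimate \eqref{eqsmb2}: one does not use the raw Voronoi cells but perturbs their boundaries. For a given cell, the ``shells'' $\{x:\dist(x,\partial(\text{cell}))\in[A_0^{-k-l-1},A_0^{-k-l})\}$, $l\geq 0$, are essentially disjoint, so their $\omega^p$-masses sum to at most $\omega^p(90B(Q))$; a pigeonhole argument lets one route the actual cube boundary through shells whose mass is geometrically small, which gives \eqref{eqsmb2} after summing the geometric series. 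Finally, the doubling dichotomy comes from a stopping-time rule: whenever $\omega^p(100B(Q))>C_0\,\omega^p(B(Q))$ one declines to subdivide $Q$ for a controlled number of generations, and the failure of \eqref{eqdob22} combined with the trivial doubling of a ball against a much larger concentric ball forces the decay \eqref{eqdob23}.

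If one wished to reprove this from scratch rather than cite it, the main obstacle would be exactly the simultaneous control of the small-boundary property across all scales: the boundary perturbations at generation $k$ must be compatible both with the nesting $Q\subset R$ and with the perturbations already chosen at generations $<k$, so the pigeonhole choices cannot be made independently scale by scale. This combinatorial bookkeeping is the heart of \cite[Theorem 3.2]{David-Mattila}, and we use their result as a black box, so this difficulty does not actually arise in our argument.
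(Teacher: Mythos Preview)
Your proposal is correct and matches the paper's approach exactly: the paper does not prove this lemma either but simply cites \cite[Theorem~3.2]{David-Mattila}, noting that it applies to any Radon measure and hence to $\omega^p$. Your additional sketch of the David--Mattila mechanism is reasonable exposition but goes beyond what the paper itself provides.
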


\vv

We use the notation $\DD=\bigcup_{k\geq0}\DD_k$. Observe that the families $\DD_k$ are only defined for $k\geq0$. So the diameter of the cubes from $\DD$ are uniformly
bounded from above.
%For $Q\in\DD$, we set $\DD(Q) =\{P\in\DD:P\subset Q\}$.
%Given $Q\in\DD_k$, we denote $J(Q)=k$. 
We set
$\ell(Q)= 56\,C_0\,A_0^{-k}$ and we call it the side length of $Q$. Notice that 
$$\frac1{28}\,C_0^{-1}\ell(Q)\leq \diam(Q)\leq\ell(Q).$$
Observe that $r(Q)\sim\diam(Q)\sim\ell(Q)$.
Also we call $z_Q$ the center of $Q$, and the cube $Q'\in \DD_{k-1}$ such that $Q'\supset Q$ the parent of $Q$.
 We set
$B_Q=28 \,B(Q)=B(z_Q,28\,r(Q))$, so that 
$$W\cap \tfrac1{28}B_Q\subset Q\subset B_Q.$$

We assume $A_0$ big enough so that the constant $C^{-1}C_0^{-3d-1}A_0$ in 
\rf{eqsmb2} satisfies 
$$C^{-1}C_0^{-3d-1}A_0>A_0^{1/2}>10.$$
Then we deduce that, for all $0<\lambda\leq1$,
\begin{align}\label{eqfk490}\nonumber
\omega^p\bigl(\{x\in Q:\dist(x,W\setminus Q)\leq \lambda\,\ell(Q)\}\bigr) + 
\omega^p\bigl(\bigl\{x\in 3.5B_Q:\dist&(x,Q)\leq \lambda\,\ell(Q)\}\bigr)\\
&\leq
c\,\lambda^{1/2}\,\omega^p(3.5B_Q).
\end{align}

We denote
$\DD^{db}=\bigcup_{k\geq0}\DD_k^{db}$.
Note that, in particular, from \rf{eqdob22} it follows that
\begin{equation}\label{eqdob*}
\omega^{p}(3B_{Q})\leq \omega^p(100B(Q))\leq C_0\,\omega^p(Q)\qquad\mbox{if $Q\in\DD^{db}.$}
\end{equation}
For this reason we will call the cubes from $\DD^{db}$ doubling. 

As shown in \cite[Lemma 5.28]{David-Mattila}, every cube $R\in\DD$ can be covered $\omega^p$-a.e.\
by a family of doubling cubes:
\vv

\begin{lemma}\label{lemcobdob}
Let $R\in\DD$. Suppose that the constants $A_0$ and $C_0$ in Lemma \ref{lemcubs} are
chosen suitably. Then there exists a family of
doubling cubes $\{Q_i\}_{i\in I}\subset \DD^{db}$, with
$Q_i\subset R$ for all $i$, such that their union covers $\omega^p$-almost all $R$.
\end{lemma}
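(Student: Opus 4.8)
The plan is a standard stopping-time / mass-distribution argument that exploits two features of the lattice of Lemma \ref{lemcubs}: non-doubling cubes are as small as possible in their generation (so a non-doubling cube, its parent and its children all live at comparable scales), and around a non-doubling cube the measure ``leaks outward'' at a definite rate, as quantified by \eqref{eqdob23}.

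Say $R\in\DD_{k_0}$, and for $x\in\supp\omega^p\cap R$ let $Q_k(x)\in\DD_k$ (for $k\geq k_0$) be the cube containing $x$; these are exactly the cubes $Q$ with $x\in Q\subseteq R$, and $Q_{k_0}(x)=R$. Let $N(R)$ be the set of $x\in R$ for which \emph{every} $Q_k(x)$, $k\geq k_0$, is non-doubling; equivalently, $N(R)$ is the set of $x$ lying in no doubling subcube of $R$ (in particular $N(R)=\varnothing$ if $R\in\DD^{db}$). Since $\DD$ is countable, it suffices to prove $\omega^p(N(R))=0$: then for $\omega^p$-a.e.\ $x\in R$ we may select a doubling cube $Q(x)$ with $x\in Q(x)\subseteq R$, and $\{Q(x)\}_x$ is the desired family. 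For $m\geq0$ let $\mathcal F_m$ be the family of $Q\in\DD_{k_0+m}$ such that $Q\subseteq R$ and $Q$ together with all its ancestors up to $R$ are non-doubling. Then the sets $\bigcup_{Q\in\mathcal F_m}Q$ decrease in $m$, $N(R)\cap\supp\omega^p\subseteq\bigcup_{Q\in\mathcal F_m}Q$ for every $m$, and the cubes of $\mathcal F_m$ are pairwise disjoint, so $\omega^p(N(R))\leq\inf_m\sum_{Q\in\mathcal F_m}\omega^p(Q)$.

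Put $l_0:=\lfloor\log_{100}C_0\rfloor$, so that $100^{l_0}\leq C_0<100^{l_0+1}$ and $l_0\geq1$ once $C_0\geq100$, and set $T_m:=\sum_{Q\in\mathcal F_m}\omega^p(100B(Q))\geq\sum_{Q\in\mathcal F_m}\omega^p(Q)$. The heart of the matter is the geometric decay
\[T_m\ \leq\ c_n\,C_0^{\,n+1-l_0}\,T_{m-1},\qquad m\geq1,\]
with $c_n$ depending only on $n$. To obtain it, for $Q\in\mathcal F_m$ with parent $Q^{\ast}\in\mathcal F_{m-1}$ I would argue: (i) since $Q$ is non-doubling, \eqref{eqdob23} with $l=l_0$ gives $\omega^p(100B(Q))\leq C_0^{-l_0}\,\omega^p(100^{\,l_0+1}B(Q))$; (ii) since $Q$ and $Q^{\ast}$ are non-doubling, $r(Q)=A_0^{-k_0-m}$ and $r(Q^{\ast})=A_0\,r(Q)$, while $z_Q\in Q\subseteq Q^{\ast}\subseteq B(z_{Q^{\ast}},28\,r(Q^{\ast}))$; as $100^{\,l_0+1}\leq100\,C_0$ and $A_0>5000\,C_0$, this forces $100^{\,l_0+1}B(Q)\subseteq 100B(Q^{\ast})$; (iii) all balls $100^{\,l_0+1}B(Q)$, $Q\in\mathcal F_m$, share a common radius comparable to $C_0A_0^{-k_0-m}$, while the balls $5B(Q)$ are pairwise disjoint by Lemma \ref{lemcubs}, so a volume count shows these balls have overlap $\lesssim_n C_0^{\,n+1}$. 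Combining (i)--(iii),
\[T_m\ \leq\ C_0^{-l_0}\sum_{Q\in\mathcal F_m}\omega^p\bigl(100^{\,l_0+1}B(Q)\bigr)\ \lesssim_n\ C_0^{\,n+1-l_0}\,\omega^p\!\Bigl(\bigcup_{Q^{\ast}\in\mathcal F_{m-1}}100B(Q^{\ast})\Bigr)\ \leq\ C_0^{\,n+1-l_0}\,T_{m-1}.\]
Because $l_0\to\infty$ as $C_0\to\infty$, choosing $C_0$ large enough (depending only on $n$) makes $c_n C_0^{\,n+1-l_0}<1$; then $T_m\to0$ and hence $\omega^p(N(R))\leq\inf_m T_m=0$, which finishes the proof.

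The main obstacle is the combination of steps (ii)--(iii) with the bookkeeping of constants: one must check that the overlap constant in (iii) is at worst polynomial in $C_0$ and, crucially, does \emph{not} depend on $A_0$, so that it is eventually absorbed by the super-polynomial gain $C_0^{-l_0}$; and the inclusion in (ii) is precisely where the hypothesis that $A_0$ be suitably large compared with $C_0$ is used. Everything else (countability, the passage from $T_m\to0$ to $\omega^p(N(R))=0$, and the trivial case $R\in\DD^{db}$) is routine.
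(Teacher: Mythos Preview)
Your argument is correct. The paper itself does not supply a proof of this lemma but simply cites \cite[Lemma 5.28]{David-Mattila}; your stopping-time argument, exploiting \eqref{eqdob23} together with the bounded overlap of the dilated balls, is precisely the mechanism behind that reference, so there is nothing to compare.
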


The following result is proved in \cite[Lemma 5.31]{David-Mattila}.
\vv

\begin{lemma}\label{lemcad22}
Let $R\in\DD$ and let $Q\subset R$ be a cube such that all the intermediate cubes $S$,
$Q\subsetneq S\subsetneq R$ are non-doubling (i.e.\ belong to $\DD\setminus \DD^{db}$).
Then
\begin{equation}\label{eqdk88}
\omega^p(100B(Q))\leq A_0^{-10n(J(Q)-J(R)-1)}\omega^p(100B(R)).
\end{equation}
\end{lemma}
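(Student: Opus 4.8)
\textbf{Proof proposal for Lemma \ref{lemcad22}.}

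The plan is to exploit two facts from Lemma \ref{lemcubs}: the non-doubling estimate \eqref{eqdob23}, which says that a non-doubling cube has much smaller harmonic measure than a suitably dilated ball, and the fact (also in Lemma \ref{lemcubs}) that non-doubling cubes $S\in\DD_k\setminus\DD_k^{db}$ satisfy $r(S)=A_0^{-k}$ exactly, so that the radii decrease by a factor $A_0$ at each step along the chain from $Q$ to $R$. Write the chain of cubes as $Q=S_0\subsetneq S_1\subsetneq\cdots\subsetneq S_m=R$ where each $S_j\in\DD_{k_j}$ with $k_0>k_1>\cdots>k_m$, and all of $S_1,\dots,S_{m-1}$ are non-doubling (the endpoints $Q$ and $R$ may be doubling or not). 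First I would observe that it suffices to prove the estimate with $100B(Q)$ replaced by $\omega^p(100B(S_1))$ on the right with an extra factor, since once we control $\omega^p(100B(S_{j}))$ in terms of $\omega^p(100B(S_{j+1}))$ we can telescope.

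The key step is the one-step comparison: for each non-doubling $S_j$, since $S_j\subsetneq S_{j+1}$ and the radii satisfy $r(S_{j+1})\geq A_0^{-k_{j+1}}\geq A_0\cdot A_0^{-k_j-(k_{j+1}-k_j-1+1)}$... more carefully, because $r(S_j)=A_0^{-k_j}$ and $z_{S_j}\in S_j\subset S_{j+1}\subset 28B(S_{j+1})$, one gets $100B(S_j)\subset C\,A_0^{-k_j+k_{j+1}}\,B(S_{j+1})$ geometrically, so after iterating \eqref{eqdob23} (which gains a factor $C_0^{-1}$ each time the ball is blown up by $100$) across the scales between $k_{j+1}$ and $k_j$, and using that $C_0$ and $A_0$ are linked, one obtains $\omega^p(100B(S_j))\leq A_0^{-10n(k_j-k_{j+1})}\,\omega^p(100B(S_{j+1}))$ or something of this flavor. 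Here one must be careful: \eqref{eqdob23} only applies for $l$ with $100^l\leq C_0$, so a single application of \eqref{eqdob23} gains a bounded factor; the real gain comes from composing the crude containment of balls across many scales with the small-boundary/non-doubling structure, exactly as in \cite[Lemma 5.31]{David-Mattila}. I would then telescope over $j=0,\dots,m-1$, noting $\sum_j (k_j-k_{j+1})=k_0-k_m=J(Q)-J(R)$ with the appropriate indexing convention, and absorb the loss at the single possibly-doubling endpoint into the ``$-1$'' in the exponent $J(Q)-J(R)-1$.

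The main obstacle I anticipate is bookkeeping the passage between the integer parameters: the cubes live in generations $\DD_k$, their ball radii $r(Q)$ are only comparable to $A_0^{-k}$ up to the factor $C_0$ (except for non-doubling cubes, where equality holds and which is precisely why the hypothesis forces all intermediate cubes to be non-doubling), and one needs the precise relation between $J(Q)$ (the generation index) and these radii to make the exponent come out as $10n(J(Q)-J(R)-1)$ rather than some other multiple. One must also verify that the dilation constant $28$ (or $90$, $100$) appearing in $B(Q)$ versus $B_Q$ does not spoil the containments $100B(S_j)\subset (\text{dilate})B(S_{j+1})$; this is where choosing $A_0$ large relative to $C_0$ and the absolute constants is used, exactly as invoked in the statement (``Suppose that the constants $A_0$ and $C_0$\ldots are chosen suitably''). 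Since this is verbatim \cite[Lemma 5.31]{David-Mattila}, I would simply cite it, but the sketch above is the shape of the argument.
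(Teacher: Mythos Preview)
The paper does not give its own proof of this lemma: it simply quotes it as \cite[Lemma 5.31]{David-Mattila}. Your proposal ultimately does the same (``Since this is verbatim \cite[Lemma 5.31]{David-Mattila}, I would simply cite it''), and the sketch you provide is a faithful outline of the David--Mattila argument --- iterate the non-doubling inequality \eqref{eqdob23} along the chain of intermediate cubes, using that $r(S)=A_0^{-k}$ exactly for $S\in\DD_k\setminus\DD_k^{db}$ to control the ball containments, and telescope --- so your approach matches the paper's.
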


%Let us remark that the constant $10$ in \rf{eqdk88} can be replaced by any other positive 
%constant if $A_0$ and $C_0$ are chosen suitably in Lemma \ref{lemcubs}, as shown in (5.30) of
%\cite{David-Mattila}.

Given a ball $B\subset \R^{n+1}$, we consider its $n$-dimensional density:
$$\Theta_\omega(B)= \frac{\omega^p(B)}{r(B)^n}.$$
%We will also write $\Theta_\omega^p(x,r)$ instead of $\Theta_\omega^p(B(x,r))$.

From the preceding lemma we deduce:

\vv
\begin{lemma}\label{lemcad23}
Let $Q,R\in\DD$ be as in Lemma \ref{lemcad22}.
Then
$$\Theta_\omega(100B(Q))\leq C_0\,A_0^{-9n(J(Q)-J(R)-1)}\,\Theta_\omega(100B(R))$$
and
$$\sum_{S\in\DD:Q\subset S\subset R}\Theta_\omega(100B(S))\leq c\,\Theta_\omega(100B(R)),$$
with $c$ depending on $C_0$ and $A_0$.
\end{lemma}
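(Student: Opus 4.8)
The plan is to derive both estimates directly from Lemma \ref{lemcad22} by elementary bookkeeping with the radii of the cubes together with the summation of a geometric series; no new idea beyond Lemma \ref{lemcad22} is needed.

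For the first inequality, I would start from the bound $\omega^p(100B(Q))\leq A_0^{-10n(J(Q)-J(R)-1)}\,\omega^p(100B(R))$ of Lemma \ref{lemcad22} and divide both sides by $(100\,r(Q))^n$. Writing $\omega^p(100B(R)) = (100\,r(R))^n\,\Theta_\omega(100B(R))$, this yields
\[
\Theta_\omega(100B(Q)) \leq A_0^{-10n(J(Q)-J(R)-1)}\,\frac{r(R)^n}{r(Q)^n}\,\Theta_\omega(100B(R)).
\]
Now I use the two-sided bound $A_0^{-k}\leq r(Q)\leq C_0 A_0^{-k}$ from Lemma \ref{lemcubs}: since $r(R)\leq C_0 A_0^{-J(R)}$ and $r(Q)\geq A_0^{-J(Q)}$, we get $r(R)^n/r(Q)^n\leq C_0^n\,A_0^{n(J(Q)-J(R))}$. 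Plugging this in, the exponent of $A_0$ becomes $-10n(J(Q)-J(R)-1)+n(J(Q)-J(R)) = -9n(J(Q)-J(R)-1)+n$, so
\[
\Theta_\omega(100B(Q)) \leq C_0^n A_0^n\, A_0^{-9n(J(Q)-J(R)-1)}\,\Theta_\omega(100B(R)),
\]
which is the asserted estimate once the fixed factor $C_0^n A_0^n$ is absorbed into the constant in front (recall $A_0$ and $C_0$ are fixed).

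For the second inequality, the key observation is that the chain hypothesis of Lemma \ref{lemcad22} is inherited by sub-chains: if $Q\subsetneq S\subsetneq R$, then every cube $T$ with $S\subsetneq T\subsetneq R$ also satisfies $Q\subsetneq T\subsetneq R$ and hence is non-doubling, so the first estimate of the present lemma applies to the pair $(S,R)$ and gives $\Theta_\omega(100B(S))\lesssim_{C_0,A_0} A_0^{-9n(J(S)-J(R)-1)}\,\Theta_\omega(100B(R))$ (this is trivial for $S=Q$ and $S=R$). Moreover, since the $\DD_k$ are nested partitions, the cubes $S$ with $Q\subset S\subset R$ form a chain containing exactly one cube of each generation $j$ with $J(R)\leq j\leq J(Q)$. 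Therefore
\[
\sum_{S\in\DD:\,Q\subset S\subset R}\Theta_\omega(100B(S))
\lesssim_{C_0,A_0} \Theta_\omega(100B(R))\sum_{j=J(R)}^{J(Q)}A_0^{-9n(j-J(R)-1)}
\leq A_0^{9n}\,\Theta_\omega(100B(R))\sum_{i=0}^{\infty}A_0^{-9ni},
\]
and since $A_0$ is large the geometric series converges to something bounded by an absolute constant, which proves the second estimate with a constant depending only on $C_0$ and $A_0$.

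There is no genuine obstacle here; the argument is routine. The only point requiring a little care is the passage from the measure estimate of Lemma \ref{lemcad22} to the density estimate: one has to track the normalization $r(Q)\sim A_0^{-J(Q)}$ correctly, and it is precisely the loss $A_0^{n(J(Q)-J(R))}$ from the ratio $r(R)^n/r(Q)^n$ that degrades the exponent $10n$ of Lemma \ref{lemcad22} to the exponent $9n$ of the statement. Once this is in hand, the second part is immediate, since summing over the intermediate cubes amounts to summing over a single generation index and the resulting series is geometric with ratio $A_0^{-9n}$.
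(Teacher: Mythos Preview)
Your argument is correct and is exactly the routine derivation the paper has in mind: the paper does not prove this lemma but simply writes ``For the easy proof, see \cite[Lemma 4.4]{Tolsa-memo}'', and the elementary bookkeeping you carry out (divide the measure estimate of Lemma~\ref{lemcad22} by $r(Q)^n$, use $r(R)^n/r(Q)^n\leq C_0^n A_0^{n(J(Q)-J(R))}$, then sum the resulting geometric series over the chain) is precisely that easy proof. One cosmetic remark: your computation produces the prefactor $C_0^nA_0^n$ rather than the bare $C_0$ written in the statement, but this is immaterial since $C_0,A_0$ are fixed structural constants and only the decay exponent $-9n$ matters in the applications.
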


For the easy proof, see
 \cite[Lemma 4.4]{Tolsa-memo}, for example.

\vv
From now on we will assume that $C_0$ and $A_0$ are some big fixed constants so that the
results stated in the lemmas of this section hold. Further, we will choose the pole $p\in\Omega$ of the harmonic measure $\omega^p$ so that $\dist(p,\,\d \Omega) \geq 10C_0$. The existence of such point $p$ can be 
assumed by dilating $\Omega$ by a suitable factor if necessary.

\vv

\section{The bad cubes}\label{secbad}

Now we need to define a family of bad cubes.
We say that $Q\in\DD$ is bad and we write $Q\in\bad$, if $Q\in\DD$ is a maximal cube satisfying one of the conditions below: 
\begin{itemize}
\item[(a)] $\mu(Q)\leq \tau\,\omega^p(Q)$, where $\tau>0$ is a small parameter to be fixed below, or
\item[(b)]  $\omega^p(3B_Q)\geq A\,r(B_Q)^n$, where $A$ is some big constant to be fixed below.
\end{itemize}
The existence maximal cubes is guarantied by the fact that all the cubes from $\DD$ have side length uniformly bounded from
above (since $\DD_k$ is defined only for $k\geq0$). 
If the condition (a) holds, we write $Q\in\LM$ (little measure $\mu$) and in the case (b), $Q\in\HD$ (high density).
On the other hand, if a cube $Q\in\DD$ is not contained in any cube from $\bad$, we say that $Q$ is good and we write
$Q\in\good$.
 
Notice that 
$$ 
\sum_{Q\in\LM} \mu(Q) \leq \tau \sum_{Q\in\LM} \omega^p(Q) \leq \tau\,\|\omega\|=\tau=\tau\,\mu(F_M).$$
Therefore, taking into account that $\tau\leq1/2$ and that $\omega^p|_{F_M}=g(x)\,\HH^n|_{F_M}$ with $g(x)\geq M$, we have by \eqref{Frostman}
\begin{align*}
\frac12\,\omega^p(F_M)&\leq \frac{1}{2}=\frac12\,\mu(F_M)\leq \mu\Bigl(F_M\setminus \bigcup_{Q\in\LM} Q\Bigr)\\
&\leq C\,\HH^n\Bigl(F_M\setminus \bigcup_{Q\in\LM} Q\Bigr)\leq C\,M\,\omega^p\Bigl(F_M\setminus \bigcup_{Q\in\LM} Q\Bigr)
.
\end{align*}

On the other hand, since $\Theta^{n,*}(x,\omega^p):=\limsup_{r\to0}\frac{\omega^p(B(x,r))}{(2r)^n}<\infty$ for $\omega^p$-a.e. $x\in\R^{n+1}$, it is also clear that
for $A$ big enough
$$\omega^p\biggl(\,\bigcup_{Q\in\HD}Q\biggr)\ll  \omega^p(F_M).$$
From the above estimates it follows that
\begin{equation}\label{eqbig}
\omega^p\biggl(F_M\setminus \bigcup_{Q\in\bad} Q\biggr) >0
\end{equation}
if $\tau$ and $A$ have been chosen appropriately. 
\vv

For technical reasons we have now to introduce a variant of the family $\DD^{db}$ of doubling cubes
defined in Section \ref{secdya}.
Given some constant $T\geq C_0$ (where $C_0$ is the constant in Lemma \ref{lemcubs}) to be fixed below,
we say that $Q\in\wt\DD^{db}$ if
$$
\omega^p(100B(Q))\leq T\,\omega^p(Q).
$$
We also set $\wt \DD^{db}_k=\wt\DD^{db}\cap \DD_k$ for $k\geq0$.
From \rf{eqdob*} and the fact that $T\geq C_0$, it is clear that $\DD^{db}\subset \wt\DD^{db}$.

\vv

\begin{lemma}\label{lemgg}
 If the constant $T$ is chosen big enough, then 
$$\omega^p\biggl(F_M \cap \bigcup_{Q\in\wt\DD_0^{db}} Q  \setminus
 \bigcup_{Q\in\bad} Q
\biggr) >0.$$
\end{lemma}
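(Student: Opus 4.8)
The plan is to combine the positivity estimate \eqref{eqbig} with the fact that cubes lying outside $\wt\DD_0^{db}$ are (essentially) non-doubling, and that the total $\omega^p$-mass carried by long chains of non-doubling cubes descending from the top is negligible. Concretely, recall from Lemma \ref{lemcubs} that the top generation $\DD_0$ consists of finitely many cubes $R$ with $r(R)\sim 1$, so $W=\supp\omega^p$ is their disjoint union. For each such $R\notin\wt\DD_0^{db}$, we want to show $\omega^p(R)$ is small (in fact we can even arrange that $\omega^p$-a.e.\ point of $W$ lies in some cube of $\wt\DD_0^{db}$, but the weaker statement suffices). Since \eqref{eqbig} gives $\omega^p(F_M\setminus\bigcup_{Q\in\bad}Q)>0$, it is enough to show
$$\omega^p\Bigl(W\setminus\bigcup_{Q\in\wt\DD_0^{db}}Q\Bigr) < \tfrac12\,\omega^p\Bigl(F_M\setminus\bigcup_{Q\in\bad}Q\Bigr),$$
which will follow once $T$ is chosen large depending on the constants already fixed (and on $\omega^p(W)$, $\HH^n_\infty(F_M)$, etc.).

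The key mechanism is Lemma \ref{lemcobdob}: every $R\in\DD_0$ can be covered, up to $\omega^p$-measure zero, by a family of \emph{doubling} cubes $\{Q_i\}\subset\DD^{db}\subset\wt\DD^{db}$, with $Q_i\subset R$. So $\omega^p$-a.e.\ point $x\in W$ lies in some $Q_i\in\DD^{db}$. The issue is only that this $Q_i$ need not be in generation $0$; it lives in $\wt\DD^{db}$ but not necessarily in $\wt\DD_0^{db}$. To handle this, fix $x\in W$ not in any $\wt\DD_0^{db}$ cube, and let $R_x\in\DD_0$ be the (unique) top cube containing it, and let $Q(x)\in\DD^{db}$ be a doubling cube from Lemma \ref{lemcobdob} with $x\in Q(x)\subset R_x$. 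Along the chain $Q(x)\subsetneq\cdots\subsetneq R_x$ of intermediate cubes, none can lie in $\wt\DD^{db}$ — in particular none is doubling — for if some intermediate $S$ were in $\wt\DD_0^{db}$ we'd need $S\in\DD_0$, impossible since $S\subsetneq R_x$ with $R_x\in\DD_0$; and if $S\in\wt\DD^{db}\setminus\wt\DD_0^{db}$ that is fine as far as the \emph{statement} goes but we must still account for its mass. The cleanest route: restrict attention to chains \emph{all of whose intermediate cubes are non-doubling}, so that Lemma \ref{lemcad22} (or Lemma \ref{lemcad23}) applies. Since $R_x\in\DD_0$, so that $r(R_x)\sim 1$ and $J(R_x)$ is a fixed bounded value, and since $\omega^p(100B(R_x))\le \omega^p(W)\le 1$, Lemma \ref{lemcad22} gives $\omega^p(100B(Q))\le A_0^{-10n(J(Q)-J(R_x)-1)}$ for any $Q$ reached from $R_x$ through an all-non-doubling chain.

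Therefore, for each top cube $R$, the set of points in $R$ not covered by any $\wt\DD_0^{db}$-cube is contained in the union of the maximal cubes $Q\subsetneq R$ such that every cube strictly between $Q$ and $R$ (inclusive of those down to but not including $Q$, and not including $R$) is non-doubling — equivalently, the maximal $Q$ whose entire ancestry up to $R$ avoids $\wt\DD^{db}$. These maximal cubes $Q$ are pairwise disjoint, each has a parent $\hat Q$ in an all-non-doubling chain from $R$, hence $\omega^p(Q)\le\omega^p(100B(\hat Q))\le A_0^{-10n(J(\hat Q)-J(R)-1)}$, and summing the geometric series over all generations $J(\hat Q)$ and over the finitely many $R\in\DD_0$ yields a bound of the form $C\,A_0^{-10n}$ or, more usefully, a bound that we can make $< \tfrac12\,\omega^p(F_M\setminus\bigcup_{\bad}Q)$ by taking $T$ large — because a cube that is \emph{not} in any $\wt\DD_0^{db}$-cover necessarily has some ancestor failing the $\omega^p(100B(\cdot))\le T\,\omega^p(\cdot)$ inequality, and chaining these failures forces $\omega^p(100B(Q))\le T^{-k}\omega^p(100B(R))$ down $k$ generations of such failures (this is exactly the quantitative refinement of \eqref{eqdk88} with $T$ in place of $A_0^{10n}$). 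Summing $\sum_k T^{-k}$ over the bounded number of top cubes gives a constant multiple of $T^{-1}$, which we send below the fixed positive quantity $\omega^p(F_M\setminus\bigcup_{\bad}Q)/2$.

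The main obstacle I expect is the bookkeeping of exactly \emph{which} comparability — doubling versus the weaker $\wt\DD^{db}$ condition with constant $T$ — is being propagated down the chains, and making sure the two different "smallness" mechanisms (Lemma \ref{lemcobdob} guaranteeing an almost-everywhere doubling cover, and Lemma \ref{lemcad22}/\eqref{eqdk88} giving geometric decay along non-doubling chains) are combined without circularity. In particular one must be careful that choosing $T$ large does not interfere with the earlier choices of $\tau$ and $A$ used to obtain \eqref{eqbig}; this is fine precisely because \eqref{eqbig} produces a \emph{fixed} positive number $\omega^p(F_M\setminus\bigcup_{\bad}Q)$ that does not depend on $T$, and $T$ is chosen last. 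Once that order of quantifiers is respected, the estimate $\omega^p\bigl(F_M\cap\bigcup_{Q\in\wt\DD_0^{db}}Q\setminus\bigcup_{Q\in\bad}Q\bigr)\ge \omega^p(F_M\setminus\bigcup_{\bad}Q)-\omega^p\bigl(W\setminus\bigcup_{\wt\DD_0^{db}}Q\bigr)>0$ closes the argument.
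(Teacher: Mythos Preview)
Your overall strategy—use \eqref{eqbig} for a fixed positive mass, bound $\omega^p\bigl(W\setminus\bigcup_{Q\in\wt\DD_0^{db}}Q\bigr)$ by $C/T$, and subtract—is exactly the paper's. But your execution wanders through Lemmas \ref{lemcobdob} and \ref{lemcad22} unnecessarily, and the key ``chaining'' step you rely on is not justified. You claim an iteration $\omega^p(100B(Q))\le T^{-k}\omega^p(100B(R))$ down $k$ generations of $\wt\DD^{db}$-failures, but we only know the \emph{single} failure $R\notin\wt\DD^{db}$ at level~$0$; its descendants may well satisfy the $\wt\DD^{db}$ condition, so there is no $k>1$ to iterate. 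The decay in Lemma \ref{lemcad22} is in powers of $A_0$, not $T$, so it cannot be made small by enlarging $T$. (Also, your assertion that $\DD_0$ is finite need not hold when $\partial\Omega$ is unbounded.)

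The paper's proof is the one-step version you are circling around: since $\DD_0$ partitions $W$, the complement of $\bigcup_{Q\in\wt\DD_0^{db}}Q$ is exactly $\bigcup_{Q\in\DD_0\setminus\wt\DD^{db}}Q$, and for each such $Q$ the definition gives $\omega^p(Q)<T^{-1}\omega^p(100B(Q))$. The balls $100B(Q)$ for $Q\in\DD_0$ have bounded overlap (the $5B(Q)$ are disjoint with comparable radii), so $\sum_{Q\in\DD_0}\omega^p(100B(Q))\le C\|\omega^p\|=C$, and the total is at most $C/T$. No chains, no descent to deeper doubling cubes, are needed.
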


Notice that above $\wt\DD_0^{db}$ stands for the family of cubes from the zero level of $\wt\DD^{db}$.

\begin{proof}
By the preceding discussion we already know that 
$$\omega^p\biggl(F_M\setminus \bigcup_{Q\in\bad} Q\biggr) >0.$$
If $Q\not\in\wt\DD^{db}$, then $\omega^p(Q)\leq T^{-1}\omega^p(100B(Q))$. Hence by the finite overlap of the balls
$100B(Q)$ associated with cubes from $\DD_0$ we get
$$\omega^p\biggl(\,\bigcup_{Q\in\DD_0\setminus\wt\DD^{db}} Q\biggr) \leq \frac1T\sum_{Q\in\DD_0}\omega^p(100B(Q))\leq
\frac CT\,\|\omega^p\|=\frac CT.$$
Thus for $T$ big enough we derive
$$\omega^p\biggl(\,\bigcup_{Q\in\DD_0\setminus\wt\DD^{db}} Q\biggr) \leq \frac12\,\omega^p\biggl(F_M\setminus \bigcup_{Q\in\bad} Q\biggr),$$
and then the lemma follows.
\end{proof}
\vv

Notice that for the points $x\in F_M\setminus \bigcup_{Q\in\bad} Q$, from the condition (b) in the definition
of bad cubes, it follows that
$$\omega^p(B(x,r))\lesssim A\,r^n\qquad \mbox{for all $0<r\leq 1$.}$$
Trivially, the same estimate holds for $r\geq1$, since $\|\omega^p\|=1$. So we have
\begin{equation}\label{eqcc0}
M^n\omega^p(x)\lesssim A\quad \mbox{ for $\omega^p$-a.e.\ $x\in F_M\setminus \bigcup_{Q\in\bad} Q$.}
\end{equation}

\vv

\section{The key lemma about the Riesz transform of $\omega^p$ on the good cubes}

\begin{lemma}[Key lemma]
Let $Q\in\good$ be contained in some cube from the family $\wt{\DD}_0^{db}$, and $x\in B_Q$. Then we have
\begin{equation}\label{eqdk0}
\bigl|\RR_{r(B_Q)}\omega^p(x)\bigr| \leq  C(A,M,T,\tau,d_p),
\end{equation}
where, to shorten notation, we wrote $d_p= \dist(p,\partial\Omega)$.
\end{lemma}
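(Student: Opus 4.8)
The plan is to estimate $\RR_{r(B_Q)}\omega^p(x)$ by splitting the integral over the annular regions around $x$ determined by the dyadic ancestors $Q=:R_0\subset R_1\subset R_2\subset\cdots$ of $Q$ (up to the top cube of $\DD$, which has bounded diameter, together with the far-away part at scale $\gtrsim 1$). Writing $\RR_{r(B_Q)}\omega^p(x)=\sum_{j}\int_{A_j}\tfrac{x-y}{|x-y|^{n+1}}\,d\omega^p(y)$, where $A_j$ is a shell of width comparable to $\ell(R_j)$ around $x$, each term is crudely bounded by $\ell(R_j)^{-n}\,\omega^p(c B_{R_j})$, i.e.\ by a density $\Theta_\omega$ of a ball of radius $\sim\ell(R_j)$. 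So the whole sum is controlled by $\sum_j \Theta_\omega(C B_{R_j})$. The point of the good-cube hypothesis is precisely to make this telescoping sum of densities converge geometrically.

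First I would dispose of the top scales: for balls of radius $\gtrsim 1$ the estimate $|\RR_r\omega^p(x)|\lesssim 1$ is immediate from $\|\omega^p\|=1$ and $r(B_Q)\le r(B_{R_j})$; moreover, since $x\in B_Q\subset\Omega$ up to boundary and the pole $p$ satisfies $\dist(p,\d\Omega)\ge 10C_0$, the contribution of the region near $p$ needs a separate but standard bound using $G(\cdot,p)$ and Bourgain-type estimates — this is where the dependence on $d_p$ enters. Next, for the intermediate cubes $R_j$ I would distinguish two situations. If $R_j$ (or a controlled dilate) is doubling in the sense of $\wt\DD^{db}$, then $\omega^p(100B(R_j))\le T\,\omega^p(R_j)$, and since $R_j\in\good$ it is not contained in any bad cube, so by condition (b) in the definition of $\bad$ we get $\omega^p(3B_{R_j})\lesssim A\,r(B_{R_j})^n$, hence $\Theta_\omega(CB_{R_j})\lesssim_{A,T} 1$. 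That alone is not summable, so the real mechanism must be: between consecutive doubling ancestors all intermediate cubes are non-doubling, and Lemma~\ref{lemcad23} gives geometric decay $\Theta_\omega(100B(R_j))\lesssim A_0^{-9n(J(R_j)-J(R_k))}\Theta_\omega(100B(R_k))$ with $\sum$ over a non-doubling chain controlled by the density at the top doubling cube of that chain. Summing over the (at most countably many) maximal doubling cubes in the tower from $Q$ up to level $0$, and using that \emph{each} such doubling cube $R$ with $R\subset$ some cube of $\wt\DD_0^{db}$ still has $\Theta_\omega(CB_R)\lesssim_{A,T}1$ but now we need these to be summable too — so one extracts geometric decay between consecutive doubling ancestors from the non-doubling gaps, i.e.\ the tower alternates and the net effect is $\sum_j\Theta_\omega(CB_{R_j})\lesssim_{A,T,C_0,A_0} 1$.

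The remaining ingredient is the use of hypothesis (a)/$\LM$ and the Frostman measure $\mu$: since $Q\in\good$, no ancestor is in $\LM$, so $\mu(R_j)\ge\tau\,\omega^p(R_j)$ for every ancestor $R_j$; combined with the Frostman growth $\mu(B(z,r))\le C r^n$ this yields $\omega^p(R_j)\le\tau^{-1}\mu(R_j)\le\tau^{-1}\mu(CB_{R_j})\lesssim \tau^{-1}\ell(R_j)^n$, i.e.\ $\Theta_\omega(CB_{R_j})\lesssim_\tau 1$ \emph{for doubling $R_j$} — this is the cleaner way to bound the density of each doubling ancestor (independent of $A$), and it is what makes the splitting robust. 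Putting these together: the full sum $\sum_j\Theta_\omega(CB_{R_j})$ telescopes because the non-doubling stretches contribute a controlled geometric series by Lemma~\ref{lemcad23}, the doubling cubes are bounded uniformly via the Frostman/good-cube comparison, and the number of ``scales of doubling cubes'' is effectively summed by the same lemma; the top scale and the pole contribute the $d_p$ term.

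\textbf{Main obstacle.} The crux — and the step I expect to require the most care — is organizing the tower of ancestors into non-doubling runs capped by doubling cubes and showing the resulting double sum of densities is \emph{geometrically} summable rather than merely termwise bounded: a naive bound $\Theta_\omega(CB_{R_j})\lesssim_{A,T,\tau}1$ for every $j$ diverges, so one genuinely needs Lemma~\ref{lemcad23} to furnish decay across non-doubling gaps, plus an argument that consecutive \emph{doubling} ancestors cannot pile up without the intervening non-doubling gap supplying a definite shrink factor (here the structural fact that $r(Q)=A_0^{-k}$ for non-doubling $Q$, and \eqref{eqdob23}, should force this). A secondary technical point is making the annular decomposition of $\RR_{r(B_Q)}\omega^p(x)$ match the dyadic scales cleanly for $x$ anywhere in $B_Q$ (not just at $z_Q$), which uses $\diam(R_j)\sim\ell(R_j)\sim r(B_{R_j})$ and $B_Q\subset B_{R_j}$, together with the small-boundary estimate \eqref{eqfk490} to handle $y$'s near the boundaries of the shells.
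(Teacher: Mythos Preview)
Your approach has a genuine gap that cannot be repaired along the lines you sketch. You propose to bound $|\RR_{r(B_Q)}\omega^p(x)|$ by the absolute-value sum $\sum_j \Theta_\omega(CB_{R_j})$ over the tower of ancestors, and then argue this sum converges. But this sum need not converge: nothing prevents \emph{every} ancestor $R_j$ from being doubling with $\Theta_\omega(CB_{R_j})\sim 1$. (Concretely, imagine $\omega^p$ is comparable to $\HH^n$ on a piece of a hyperplane near $Q$; every ancestor is doubling, none is in $\HD$ if $A$ is large, none is in $\LM$ if $\mu$ is comparable, yet the density is $\sim 1$ at every scale and your sum is $\sim$ the number of generations.) Your hoped-for mechanism --- that consecutive doubling ancestors must be separated by a non-doubling gap supplying geometric decay --- is simply false; the structural fact $r(Q)=A_0^{-k}$ for $Q\notin\DD^{db}$ says nothing about doubling cubes piling up. Lemma \ref{lemcad23} only yields decay \emph{within} a non-doubling run, not \emph{between} doubling cubes. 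In short, passing to absolute values discards the cancellation in the Riesz kernel, and that cancellation is exactly what makes $\RR_{r(B_Q)}\omega^p$ bounded in the model case.

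The paper's argument is structurally different and uses the harmonic-measure provenance of $\omega^p$ in an essential way, not just the good/bad combinatorics. For a \emph{doubling} good cube $Q$, one exploits porosity to find a ball $B_0$ with $\overline{B_0}\subset B_Q\setminus\partial\Omega$, $r(B_0)\sim r(B_Q)$, whose center $z_{B_0}$ lies either in $\Omega$ or in $(\overline\Omega)^c$. At such a point the full (untruncated) Riesz transform is computed via the identity $\RR\omega^p(x)=K(x-p)-c_n\nabla_xG(x,p)$ for $x\in\Omega$ (respectively $\RR\omega^p(x)=K(x-p)$ in the complement). The gradient of the Green function is then controlled by Caccioppoli together with Lemma \ref{l:w>G} and Bourgain's Lemma \ref{lembourgain}; this is where the Frostman comparison $\omega^p(2B_Q)\lesssim\omega^p(Q)\le\tau^{-1}\mu(Q)$ enters, and where $d_p$ appears through $K(x-p)$. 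This yields $|\RR\omega^p(z_{B_0})|\lesssim 1$ at a single point \emph{with no infinite sum}. One then transfers to arbitrary $y\in B_Q$ by the standard Calder\'on--Zygmund comparison $|\RR_{r(B_Q)}\omega^p(y)|\lesssim |\RR_{r(B_0)}\omega^p(z_{B_0})|+M^n_{\ell(Q)}\omega^p(z_{B_Q})$, and the maximal function --- a \emph{supremum}, not a sum --- is $\lesssim A$ because no ancestor is in $\HD$. Only for non-doubling $Q$ does one telescope up to the nearest doubling ancestor $Q'$ and invoke Lemma \ref{lemcad23}, and there the sum is over a purely non-doubling chain, where the geometric decay is genuine. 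You never use porosity, the Green function identity, Caccioppoli, or Lemmas \ref{lembourgain}--\ref{l:w>G}; without them the argument does not close.
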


\begin{proof} 
To prove the lemma, clearly we may assume that $r(B_Q)\ll\dist(p,\partial\Omega)$ and that $r(P)<r_{0}$ for any $P\in \good$, where $r_{0}$ is as in the statement of Theorem \ref{teo1}.
First we will prove \rf{eqdk0} for $Q\in\wt\DD^{db}\cap\good$.
In this case, by definition we have
$$\mu(Q)> \tau\,\omega^p(Q)\quad \mbox{and}\quad \omega^p(3B_Q)\leq T\,\omega^p(Q).$$
We consider a ball $B$ centered on $Q\cap \supp\mu$ with $\delta^{-1}B\subset 2B_Q$ (where $\delta$ is the constant in
Lemma \ref{lembourgain}) such that $\mu(B)\gtrsim \mu(Q)$ and $r(B)\sim_\delta r(B_Q)$. Also, appealing to the porosity condition 
of $\partial\Omega$ in $E$ and the fact that $\supp\mu\subset E$, we may take another ball $B_0$ such that $\overline{B_0}\subset B\setminus \partial\Omega$ with $$r(B_0)\sim r(B)\sim r(B_Q).$$
Here (as well as in the rest of the lemma) all implicit constants may depend on $\delta$.

Denote by $\EE(x)$ the fundamental solution of the Laplacian in $\R^{n+1}$, so that the Green function $G(\cdot,\cdot)$
of $\Omega$ equals
\begin{equation}\label{eqgreen}
G(x,p) = \EE(x-p) - \int \EE(x-y)\,d\omega^p(y).
\end{equation}
Notice that the kernel of the Riesz transform is
\begin{equation}\label{eqker}
K(x) = c_n\,\nabla \EE(x),
\end{equation}
for a suitable absolute constant $c_n$. 
For $x\in\R^{n+1}\setminus \overline\Omega$, since $K(x-\cdot)$ is harmonic in $\Omega$, we have
\begin{equation}\label{eqclau2}
\RR\omega^p(x) = \int K(x-y)\,d\omega^p(y) = K(x-p).
\end{equation}
For $x\in\Omega$, by \rf{eqker} and  \rf{eqgreen}  we have
\begin{align}\label{eqclau1}
\RR\omega^p(x) = 
c_n\nabla_x\int \EE(x-y)\,d\omega^p(y) & = c_n\,\nabla_x\bigl(\EE(x-p) - G(x,p)\bigr) \nonumber\\
&= K(x-p) - c_n\,\nabla_x G(x,p).
\end{align}

So if  $B_0\subset \R^{n+1}\setminus \overline\Omega$, then \rf{eqclau2} holds for all $x\in B_0$, while
 if $B_0\subset \Omega$, then 
 every $x\in B_0$ satisfies \rf{eqclau1}.
We claim that, in any case, for the center $z_{B_0}$ of $B_0$ we have
\begin{equation}\label{eqclau0}
|\RR\omega^p(z_{B_0})|\lesssim 1.
\end{equation}
This is clear if $B_0\subset \R^{n+1}\setminus \overline\Omega$, since in this case
$$|\RR\omega^p(z_{B_0})|= |K(p-z_{B_0})|\sim \dist(p,\partial\Omega)^{-n}.$$
Suppose now that $B_0\subset \Omega$.
From \rf{eqclau1} we infer that for all $x\in B_0$ we have
\begin{equation}\label{eqkey6}
|\RR\omega^p(x)|^2 \lesssim 1 + |\nabla_x G(x,p)|^2.
\end{equation}
Averaging this with respect to the Lebesgue measure $m$ on $\frac14 B_0$ and applying Caccioppoli's inequality,
\begin{align}\label{eqdk1}\nonumber
\avint_{\frac14B_0} |\RR\omega^p|^2\,dm & \lesssim 
1 + \avint_{\frac14B_0} |\nabla_x G(x,p)|^2\,dm(x)\\
& \lesssim 
1 + \avint_{\frac12B_0} \frac{|G(x,p) - G(z_{B_0},p)|^2}{r(B_0)^2} \,dm(x) .
\end{align}
For $x\in\frac12 B_0$, in the case $n=1$,
by Lemma \ref{l:w>G} we have
$$|G(x,p) - G(z_{B_0},p)|\lesssim
\frac{\omega^p(\delta^{-1}B)}{r(\delta^{-1}B)^{n-1}}\,\frac1{\omega^{z_{B_0}}(\delta^{-1}B)}
$$
The same estimate holds for $n\geq2$ using that
$$|G(x,p) - G(z_{B_0},p)|\leq |G(x,p)|+ G(z_{B_0},p)|\lesssim  G(z_{B_0},p)|,$$
by Harnack's inequality, and then plugging Lemma \ref{l:w>G} again.
Also, from Bourgain's Lemma \ref{lembourgain} and \eqref{Frostman} we get
$$\omega^{z_{B_0}}(\delta^{-1}B) \geq \frac{\mu(B)}{r(B)^n}.$$
Therefore,  
$$|G(x,p) - G(z_{B_0},p)|
\lesssim \frac{\omega^p(\delta^{-1}B)}{r(\delta^{-1}B)^{n-1}}\,\frac{r(B)^n}{\mu(B)} \lesssim \frac{\omega^p(2B_Q)\,r(B)}{\mu(Q)}.$$
From the fact that $Q$ is doubling (from $\wt\DD^{db}$) and good, we deduce that $\omega^p(2B_Q)\lesssim \omega^p(Q)\leq\tau^{-1}\mu(Q)$, and so
$$|G(x,p) - G(z_{B_0},p)|\leq C(\tau)\,r(B)\quad\mbox{for all $x\in \frac12B_0$.}$$ 
Thus, by the harmonicity of $\RR\omega^p$ in $B_0$,  H\"older's inequality, \rf{eqdk1}, and the last estimate, we get
\begin{align*}
|\RR\omega^p(z_{B_0})| & = \left|\avint_{\frac14B_0} \RR\omega^p\,dm\right|\lesssim 
\avint_{\frac14B_0} |\RR\omega^p|^2\,dm\\
&\lesssim 1 + \avint_{\frac12B_0} \frac{|G(x,p) - G(z_{B_0},p)|^2}{r(B_0)^2}  \,dm(x) \lesssim 1
\end{align*}
with the implicit constant depending on $\tau$ and other parameters of the construction, and so \rf{eqclau0} holds in this case too.

From standard Calder\'on-Zygmund estimates and the fact that
$$|\RR_{r(B_0)}\omega^p(z_{B_0})| = |\RR\omega^p(z_{B_0})|\lesssim 1,$$
we derive that, for all $y\in B_Q$,
$$|\RR_{r(B_Q)}\omega^p (y)|\lesssim |\RR_{r(B_0)}\omega^p (z_{B_0})| + M^n_{\ell(Q)}\omega^p(z_{B_Q})\lesssim_A 1,$$
where $z_{B_Q}$ is the center of $B_Q$. In the last estimate we took into account that $Q$ and hence all its ancestors are good and thus $Q\not\in\HD$.
Hence the lemma holds when $Q\in\wt{\DD}^{db}\cap\good$.
\vv

Consider now the case $Q\in\good\setminus\wt\DD^{db}$. Let $Q'\supset Q$ be the cube from $\wt\DD^{db}$ with minimal side length.
The existence of $Q'$  is guarantied by the assumption in the lemma regarding the existence of some cube from $\wt\DD^{db}_0$ 
containing $Q$.
For all $y\in B_Q$ then we have
$$|\RR_{r(B_Q)}\omega^p (y)| \leq |\RR_{r(B_{Q'})}\omega^p (y)| + C\,\sum_{P\in\DD: Q\subset P\subset Q'}\Theta_\omega(2B_P). 
$$
The first term on the right hand side is bounded by some constant depending on $A,M,\tau,\ldots$.
To bound the last sum we can apply Lemma \ref{lemcad23} (because the cubes that are not from $\wt\DD^{db}$ do not belong to $\DD^{db}$ either) and then we get 
$$\sum_{P\in\DD: Q\subset P\subset Q'}\Theta_\omega(2B_P)\lesssim C\,\Theta_\omega(4B_Q').$$ 
Finally, since $Q'\not\in\HD$, we have $\Theta_\omega(4B_Q')\lesssim C\,A$. So \rf{eqdk0} also holds in this case.
\end{proof}
\vv

\vv
From the lemma above we deduce the following corollary.

\begin{lemma}\label{lemaxcor}
For $Q\in\good$ and $x\in B_Q$, we have
\begin{equation}\label{eqdk10}
\RR_{*,r(B_Q)}\omega^p(x) \leq  C(A,M,\tau,d_p),
\end{equation}
where, to shorten notation, we wrote $d_p= \dist(p,\partial\Omega)$.
\end{lemma}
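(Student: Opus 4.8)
The plan is to upgrade the single-scale bound from the Key Lemma into a bound for the full truncated maximal Riesz transform $\RR_{*,r(B_Q)}\omega^p$ by a telescoping argument over the ancestors of the point $x$. First I would fix $Q\in\good$ and $x\in B_Q$, and take any $\ve\geq r(B_Q)$. I want to estimate $|\RR_\ve\omega^p(x)|$. The idea is to pick the cube $R\supset Q$ (or rather the ancestor whose scale is comparable to $\ve$) so that $\ve\sim r(B_R)$ and $x\in B_R$: since all ancestors of $Q$ are good (by definition of $\good$ as the complement of the union of the bad cubes), and $Q$ is contained in a cube of $\wt\DD_0^{db}$, every such ancestor $R$ satisfies the hypotheses of the Key Lemma, so $|\RR_{r(B_R)}\omega^p(x)|\leq C(A,M,T,\tau,d_p)$.

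Next I would control the difference $|\RR_\ve\omega^p(x) - \RR_{r(B_R)}\omega^p(x)|$. Since $\ve\sim r(B_R)$, this difference is an integral of the Riesz kernel over an annulus $\{y: \min(\ve,r(B_R))<|x-y|\leq\max(\ve,r(B_R))\}$ of bounded eccentricity, hence is bounded by $C\,\omega^p(B(x,C\ve))/\ve^n \lesssim M^n\omega^p(x)$. By \eqref{eqcc0}, for $\omega^p$-a.e.\ $x\in F_M\setminus\bigcup_{Q\in\bad}Q$ we have $M^n\omega^p(x)\lesssim A$; alternatively, since $x\in B_Q$ with $Q\in\good$ and all ancestors good, the density estimate $\omega^p(3B_P)\lesssim A\,r(B_P)^n$ holds for all ancestors $P$ of $Q$, which gives $\omega^p(B(x,t))\lesssim A\,t^n$ for all $t\gtrsim r(B_Q)$, and in particular for $t\sim\ve$. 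Either way the annulus term is $\lesssim_A 1$, so $|\RR_\ve\omega^p(x)|\leq C(A,M,T,\tau,d_p)$ for every $\ve\geq r(B_Q)$, and taking the supremum over such $\ve$ yields \eqref{eqdk10}.

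The one point that needs a little care — and which I expect to be the main (minor) obstacle — is matching the scale $\ve$ to an actual cube $R\in\DD$ containing $x$ with $r(B_R)\sim\ve$. Because $x\in B_Q$ need not lie in $Q$ itself, one should instead choose $R$ to be the ancestor of $Q$ with $r(R)\sim\ve$; then $x\in B_Q\subset B_R$ provided the scales are comparable, using $B_Q=28B(Q)$ and that $B(Q)\subset C_0 B(R)$ when $R$ is the ancestor at a comparable level. For $\ve$ larger than the diameter of the largest cube in $\DD$ (recall $\DD_k$ is only defined for $k\geq 0$), one simply bounds $|\RR_\ve\omega^p(x)|\leq \ve^{-n}\|\omega^p\| \leq 1$ directly, since $\|\omega^p\|=1$ and $\ve\gtrsim 1$. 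Combining the two regimes of $\ve$ gives the uniform bound, completing the proof.
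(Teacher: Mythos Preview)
Your argument is correct and is precisely the natural derivation the paper has in mind; the paper itself gives no proof beyond ``From the lemma above we deduce the following corollary,'' and your telescoping-to-an-ancestor plus annulus-density argument is exactly how one fills in that gap. Two small remarks: first, the containment $B_Q\subset B_R$ is not literally true (one only gets $B_Q\subset cB_R$ for some $c>1$ close to $1$), but the Key Lemma's conclusion extends to $x\in 2B_R$ with no change in the proof, and indeed the paper itself uses this same looseness in the second case of the Key Lemma; second, you are right to import the hypothesis that $Q$ lies in a cube of $\wt\DD_0^{db}$, which the paper's statement of the corollary omits but which is needed to invoke the Key Lemma and is present in the only application in Section~\ref{secend}.
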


\vv

\section{The end of the proof of Theorem \ref{teo1}}\label{secend}

Set
$$G= F_M \cap \bigcup_{Q\in\wt{\DD}_0^{db}} Q  \setminus
 \bigcup_{Q\in\bad} Q.$$
and recall that, by
Lemma \ref{lemgg}, 
$$\omega^p(G)>0.$$
As shown in \rf{eqcc0}, we have
\begin{equation}\label{eqcc1}
M^n\omega^p(x)\lesssim A\quad \mbox{ for $\omega^p$-a.e.\ $x\in G$.}
\end{equation} 
On the other hand, from Lemma \ref{lemaxcor} is also clear that
\begin{equation}\label{eqcc2}
\RR_{*}\omega^p(x)\leq C(A,M,\tau,d_p)\quad \mbox{ for $\omega^p$-a.e.\ $x\in G$.}
\end{equation}

Now we will apply the following result.

\begin{theorem}  \label{teo**}
Let $\sigma$ be a Radon measure with compact support on $\R^d$ and consider a $\sigma$-measurable set
$G$ with $\sigma(G)>0$ such that
$$G\subset\{x\in \R^d: 
M^n\sigma(x) < \infty \mbox{ and } \,\RR_* \sigma(x) <\infty\}.$$
Then there exists a Borel subset $G_0\subset G$ with $\sigma(G_0)>0$  such
that $\sup_{x\in G_0}M^n\sigma|_{G_0}(x)<\infty$ and  $\RR_{\sigma|_{G_0}}$ is bounded in $L^2(\sigma|_{G_0})$.
\end{theorem}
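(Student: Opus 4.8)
The plan is to deduce Theorem \ref{teo**} from the $Tb$-type / nonhomogeneous Calder\'on-Zygmund machinery of Nazarov, Treil and Volberg, in the form stated and used in \cite{NToV} and \cite{NToV-pubmat}. First I would reduce to a situation with a fixed finite upper density: since $G\subset\{M^n\sigma<\infty\}\cap\{\RR_*\sigma<\infty\}$ and $\sigma(G)>0$, by a standard exhaustion argument (writing $G$ as a countable union of the sets where $M^n\sigma\le N$ and $\RR_*\sigma\le N$, and using $\sigma(G)>0$ to pick one of positive measure) we may assume there is a constant $N<\infty$ with $M^n\sigma(x)\le N$ and $\RR_*\sigma(x)\le N$ for all $x\in G$, after replacing $G$ by such a subset. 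Restricting $\sigma$ to $G$ we already get $M^n(\sigma|_G)\le M^n\sigma\le N$ pointwise on the whole space (the restricted measure is smaller), so $\sigma|_G$ has $n$-growth, i.e. $\sigma|_G(B(x,r))\le N r^n$ for all $x,r$.

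Next I would invoke the good-$\lambda$ / maximal-function comparison between $\RR_*\sigma$ and $\RR_{*}(\sigma|_G)$ together with $M^n$: for $x\in G$ one has the pointwise bound $\RR_{*}(\sigma|_G)(x)\lesssim \RR_*\sigma(x)+M^n\sigma(x)\lesssim N$, because truncating the kernel and removing the part of $\sigma$ supported off $G$ only costs a multiple of $M^n\sigma$. Thus $\nu:=\sigma|_G$ is a compactly supported measure with $n$-polynomial growth such that the truncated Riesz transforms $\RR_\ve\nu$ are uniformly bounded, say by $C(N)$, on $\supp\nu=\overline{G}$ $\nu$-a.e.\ (the bound holds on $G$, which is $\nu$-full). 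At this point the hypotheses of the David--Mattila / NTV construction are met: one applies the main result of Nazarov--Treil--Volberg on the existence of a large piece with bounded Riesz transform — concretely, the statement that if $\nu$ has $n$-growth and $\sup_\ve|\RR_\ve\nu|\le C$ $\nu$-a.e., then there is $G_0$ with $\nu(G_0)\ge \tfrac12\nu(G)$ (or at least $>0$) on which $\RR_{\nu|_{G_0}}$ is bounded on $L^2(\nu|_{G_0})$. This is exactly the ``$L^2$ boundedness on a big piece'' theorem, proved via a suppressed-kernel $T1$ theorem and a stopping-time/Cotlar argument; I would cite it from \cite{NToV} (and \cite{NToV-pubmat} for the precise quantitative version) rather than reprove it.

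Finally I would observe that $G_0\subset G$ inherits $\sup_{x\in G_0}M^n(\sigma|_{G_0})(x)\le \sup_{x\in G_0}M^n(\sigma|_G)(x)\le N<\infty$ automatically, and $\sigma(G_0)=\nu(G_0)>0$, which gives all the asserted conclusions. The main obstacle — and the only genuinely nontrivial input — is the passage from ``$\RR_*\nu$ pointwise finite (indeed bounded) $\nu$-a.e.\ on a set of positive measure'' to ``$\RR$ bounded in $L^2$ on a further positive-measure subset''; this is the deep theorem essentially due to Nazarov, Treil and Volberg, and the whole argument rests on citing it correctly with the right growth normalization. The surrounding reductions (exhaustion to get a uniform bound $N$, the elementary $\RR_{*}(\sigma|_G)\lesssim \RR_*\sigma+M^n\sigma$ inequality, and the monotonicity $M^n(\sigma|_{G_0})\le M^n(\sigma|_G)$) are routine and I would not belabor them.
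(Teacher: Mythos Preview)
Your reduction contains a genuine error. The pointwise inequality you assert,
\[
\RR_*(\sigma|_G)(x)\;\lesssim\;\RR_*\sigma(x)+M^n\sigma(x)\qquad(x\in G),
\]
is false in general. Writing $\RR_\ve(\sigma|_G)=\RR_\ve\sigma-\RR_\ve(\sigma|_{G^c})$, what you would need is a bound on $\RR_*(\sigma|_{G^c})(x)$ for $x\in G$, and this is \emph{not} controlled by $M^n\sigma(x)$: the Riesz kernel has size $|x-y|^{-n}$, so the contribution of each dyadic annulus is of order $M^n\sigma(x)$ and the sum over scales diverges. A concrete counterexample in the plane ($n=1$, $d=2$): take $\sigma=\HH^1|_{[-1,1]\times\{0\}}$ and $G=[-1,0]\times\{0\}$. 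Then $M^1\sigma\le 2$ and $\RR_*\sigma\le C$ everywhere on the segment, yet for $x=(-\delta,0)\in G$ one computes $|\RR_\ve(\sigma|_{G^c})(x)|\sim\log(1/\delta)$ whenever $\ve<\delta$, so $\RR_*(\sigma|_G)(x)\to\infty$ as $\delta\to0^+$. Thus you cannot first restrict to $\nu=\sigma|_G$ and then invoke the ``big piece'' theorem for a measure with $n$-growth and bounded maximal Riesz transform; the hypotheses of that theorem are not met by $\nu$.

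This is exactly why the paper says the result requires the Nazarov--Treil--Volberg $Tb$ theorem \emph{in combination with} the methods of \cite{Tolsa-pams} (and refers to \cite[Theorem~8.13]{Tolsa-llibre} for the detailed Cauchy-transform argument). The missing ingredient is a stopping-time / suppressed-kernel construction carried out on the \emph{original} measure $\sigma$: one performs a Calder\'on--Zygmund type decomposition adapted to the level sets of $M^n\sigma$ and $\RR_*\sigma$, defines the suppression function from the stopping cubes, and only then applies the non-homogeneous $Tb$ theorem. The good subset with $n$-growth and $L^2$-bounded Riesz transform is produced \emph{by} that argument, not as an elementary preliminary restriction. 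Note also that the references you cite, \cite{NToV} and \cite{NToV-pubmat}, are the Nazarov--Tolsa--Volberg rectifiability papers; the relevant $Tb$ theorem is in \cite{NTV} (Nazarov--Treil--Volberg) and \cite{Volberg}.
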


This result follows from the deep non-homogeneous Tb theorem of Nazarov, Treil and Volberg in \cite{NTV} (see also \cite{Volberg}) in combination with the methods in \cite{Tolsa-pams}. For the detailed proof in the case of the Cauchy
transform, see \cite[Theorem 8.13]{Tolsa-llibre}. The same arguments with very minor modifications work for the Riesz transform.
\vv

From \rf{eqcc1}, \rf{eqcc2} and Theorem \ref{teo**} applied to $\sigma=\omega^p$ in case that $\partial\Omega$ is compact, we infer that there exists a subset $G_0\subset G$
such that the operator $\RR_{\omega^p|_{G_0}}$ is bounded in $L^2(\omega^p|_{G_0})$. By Theorem 1.1 of \cite{NToV-pubmat}
(or the David-L\'eger theorem  \cite{Leger} for $n=1$), we deduce
that $\omega^p|_{G_0}$ is $n$-rectifiable.

If $\partial\Omega$ is non-compact, then we consider a ball $B(0,R)$ such that $\omega^p(G\cap B(0,R))>0$ and we set
$\sigma = \chi_{B(0,2R)}\omega^p$. Since 
$$\RR_*(\chi_{B(0,2R)^c}\omega^p)(x)\leq \frac{\omega^p(B(0,2R)^c)}{R}<\infty \quad \mbox{ for all $x\in B(0,R)$,}$$
from \rf{eqcc2} we infer that 
$\RR_*\sigma(x)<\infty$ for all $x\in G\cap B(0,R)$, and so we can argue as above.

\vvv

\end{document}